\documentclass[onecolumn]{IEEEtran}
\usepackage{graphicx} % Required for inserting images
\usepackage{geometry}
\usepackage{subcaption}
\usepackage{amsthm}
\usepackage[T1]{fontenc}
\usepackage{amsmath}
\usepackage{graphicx}
\usepackage{amssymb,bbm}
\usepackage{mathtools}
\usepackage{float}
\usepackage[linesnumbered,ruled,vlined,noend]{algorithm2e}
\usepackage{tikz}
\usetikzlibrary{shapes,arrows}
\usepackage{hyperref}

\tikzset{
    block/.style={rectangle, draw, line width=0.5mm, black, text width=8em, text centered,
                  rounded corners, minimum height=2em},
    line/.style={draw, thick, ->},
    line_label/.style={draw, thick}
}%

\usepackage{xcolor}

\usepackage{mathtools}

\newtheorem{lemma}{Lemma}
\newtheorem{problem}{Problem}
\newtheorem{example}{Example}
\newtheorem{proposition}{Proposition}

\newtheorem{definition}{Definition}
\newtheorem{assumption}{Assumption}
\newtheorem{remark}{Remark}

\newcommand{\STM}[2]{\Phi(#2,#1)}
\newcommand{\zeros}[1]{0_{#1}}
\newcommand{\eye}[1]{I_{#1}}

\tikzset{
    block/.style={rectangle, draw, line width=0.5mm, black, text width=8em, text centered,
                  rounded corners, minimum height=2em},
    line/.style={draw, -latex}
}%

\title{Uniting Iteration Limits for Mixed-Integer Quadratic MPC}

\author{Luke Fina$^{*}$, Christopher Petersen$^{*}$\thanks{$^{*}$Department of Mechanical and Aerospace Engineering, University
of Florida, Gainesville, FL USA. Emails: \textit{l.fina@ufl.edu,~c.petersen1@ufl.edu. }}}

\begin{document}

\maketitle

\begin{abstract}
Iteration limited model predictive control (MPC) can stabilize a feedback control system under sufficient conditions; this work explores combining a low iteration limit MPC with a high iteration limit MPC for mixed-integer quadratic programs (MIQPs) where the suboptimality is due to solver iteration limits. To combine the two MPCs a hybrid systems controller is developed that ``unites'' two MIQP-MPC solvers where the iteration limits of interest are the branch-and-bound and quadratic programming iteration limits. Asymptotic stability and robustness of the hybrid feedback control system are theoretically derived. Then an interpretable branch-and-bound algorithm and implementable uniting controller algorithm are developed. Finally, the developed algorithms and varying iteration limits are empirically evaluated in simulation for the switching thruster and  minimum thrust spacecraft rendezvous problems.
    
\end{abstract}
\section{Introduction}

Mixed-integer quadratic model predictive control (MIQP-MPC) can solve complex autonomy problems that blend decision making with control \cite{ioan2021mixed}, but are computationally expensive \cite{pia2017mixed}. We reduce compute usage in this paper via switching between two suboptimal MIQP-MPC solvers. There is a growing body of literature where suboptimal solutions are sufficient for a stable MPC feedback control laws \cite{scokaert2002suboptimal,hauswirth2024optimization}. Suboptimal MPC solutions are advantageous in compute-limited autonomous systems, e.g, nonlinear spacecraft rendezvous \cite{behrendt2025time} and spacecraft spin stabilization \cite{ambrosino2025regret}. We model the MPC feedback control loop via a hybrid system model with a `united' controller where the united controllers are the same MPC problem with high and low iteration limits \cite{sanfelice2021hybrid}, where the high limit is globally stabilizing and the low limit is locally stabilizing. This allows for an interpretable branch-and-bound algorithm and implementable uniting control algorithm to solve the MIQP-MPC problem. Specifically, we model switching between branch-and-bound iteration limits and quadratic programming (QP) iteration limits from a hybrid model with conditions for asymptotic stability of the feedback control system. As a result, this paper yields a framework for suboptimal MIQP-MPC via switching the solver iteration limits to user-desired limits.

To the authors' knowledge, this work is the first to model switching between two iteration limited MPC controllers as a united hybrid model. We draw theoretical insights from a recent work highlighting that parametric MIQPs can be viewed as a well-posed hybrid dynamical systems and that there exist iteration limited cases for MIQP-MPC that are theoretically stable \cite{fina2025hybrid}. Furthermore, \cite{ludden2025computational} proposes that there exist controllable computational dynamics for MPC on hardware. This leads to the question: are iteration limits controllable computational dynamics? To explore if computational dynamics are controllable, this paper uses hybrid systems theory \cite{sanfelice2021hybrid}. Hybrid systems are systems that have continuous time and discrete time parts. 

There are few works that align with the goals of this paper. However, inspiration for this paper blends ideas from switching via hybrid systems theory and reference governors. Both \cite{hustig2024uniting,wintz2022global} are hybrid methods that inspire this paper, but neither work focuses on MPC or MIQPs. This paper is similar in its goal to reduce optimization compute with reference governors \cite{kolmanovsky2014reference}, specifically feasibility reference governors enlarge the feasible region of MPC via modifying the reference signal \cite{skibik2021feasibility}.
This paper and reference governors build off existing stable controllers to retain goals encoded in optimization constraints and closed loop stability, but reference governors modify the reference signal while this paper modifies the solver iteration limit of the MPC optimization problem to a user-desired limit. 

% From a hybrid systems perspective, \cite{hustig2024uniting} is similar to this work where they explore switching between different optimization algorithms \cite{hustig2024uniting}. Another similar hybrid systems work switches between certified and uncertified controllers \cite{wintz2022global}. 

The idea of user-desired solver limits comes from \cite{scokaert2002suboptimal} and \cite{behrendt2025time}
that explores the computational viability and dynamical stability of a solver iteration limited case, termed ``time-constrained MPC'', for nonlinear MPC applied to the autonomous rendezvous and proximity problem. Additionally, \cite{morales2025lightweight} explores compute-limited nonlinear MPC computation limits for robotic blimps. In contrast, we focus on MIQPs and switch between a high and low iteration limit with stability results from hybrid systems theory.

For MIQP algorithms, there is a need to distinguish between the effect of integer variable constraint satisfaction and continuous variable constraint satisfaction. Therefore, inspired by the corpus of literature focused on interpretable neural network models, we develop an interpretable branch-and-bound algorithm \cite{belotti2013mixed}.  The interpretable branch-and-bound algorithm allows for clear analysis of integer and continuous variable constraint satisfaction. We view branch-and-bound iteration limits as controlling integer constraint satisfaction and quadratic programming iteration limits as controlling continuous constraint satisfaction.
There are multiple works that develop interpretable MIQP solvers aimed at computationally-limited systems \cite{quirynen2023tailored,stellato2018embedded,arnstrom2023bnb}, but they focus on computational efficiency and speed, e.g., lower memory requirements and reusing previous solutions within the branching tree. Compared to this paper, \cite{quirynen2023tailored,stellato2018embedded,arnstrom2023bnb} are not explicitly derived by control theory or address MIQP solver iteration limits.

In contrast to the works in the literature, we develop a hybrid systems model for switching between high and low iteration limits when solving the MIQP-MPC. Then an implementable control law algorithm is developed (Algorithm~\ref{alg:uniting_control}) along with an interpretable branch-and-bound algorithm (Algorithm~\ref{alg:bnb}) for theoretical stability analysis and interpretable algorithm analysis. The stability analysis is validated in simulation for the following MIQP-MPC problems:
1) switching control spacecraft rendezvous problem \cite{finaScitech2026} 2) minimum thrust spacecraft rendezvous problem \cite{fina2025hybrid}.

The contributions of this work are the following:
\begin{itemize}
  \item A hybrid model and uniting control law for switching branch-and-bound iteration limits and quadratic programming iteration limits with asymptotic stability and robustness guarantees (Section~\ref{sec:hybrid_theory}).
    \item An interpretable branch-and-bound algorithm for solving MIQPs and an implementable hybrid uniting control algorithm (Section~\ref{sec:algs}).
    \item The stability and robustness theoretical guarantees are evaluated in simulation for various branch-and-bound and quadratic programming iteration limited cases applied to the switching thruster spacecraft rendezvous problem (Example~\ref{prob:switching_thrusters}) and minimum thrust spacecraft rendezvous problem (Example~\ref{prob:min_thrust_MIQP}).

\end{itemize}
The main improvements from the conference version \cite{fina2026ACC} are hybrid system theory, an extensive simulation study, and comparison of quadratic programming iteration limits versus branch-and-bound iteration limits.

The rest of this paper is organized as follows. Section~\ref{sec:prelims} presents preliminaries and the MIQP-MPC problem formulations. Section~\ref{sec:hybrid_theory} presents hybrid systems theory preliminaries and stability analysis. Section~\ref{sec:algs} presents an interpretable branch-and-bound algorithm with preliminaries and an implementable uniting control law. Section~\ref{sec:results} presents simulation results for the proposed hybrid uniting control law. Section~\ref{sec:conclusion} concludes. 

\textbf{Notation:} The notation $\mathbb{R}$ denotes the real numbers, $\mathbb{R}_{+}$ denotes the positive real numbers, $\mathbb{Z}$ denotes the integers, $\mathbb{Z}_{+}$ denotes the positive integers, $\mathbb{B}$ denotes the binary variables $\{ 0,1\},$ and $\mathbb{Q}$ denotes the rational numbers. For a vector $\nu \in  \mathbb{R}^{n},$ we write $\nu^{T}$ for its transpose. The notation  $\| \cdot \|_{\infty}$ denotes the $\ell^{\infty}$ norm, $\| \cdot \|_{1}$ denotes the $\ell^{1}$ norm, $\| \cdot \|_{2}$ denotes the $\ell^{2}$ norm, $\mathbb{N}$ denotes an $\ell^{2}$ norm ball around the origin, $\kappa(\cdot)$ denotes a mapping from the optimization vector, $y$, to extract the control part $v$, $M:\mathbb{R}^{n}\rightrightarrows\mathbb{R}^{m}$ denotes a set value mapping \cite{sanfelice2021hybrid}, $I_{m}$ denotes an $m\times m$ identity matrix, and $\setminus$ denotes the set subtraction.

\textbf{Optimization Terminology:} Throughout this work there is a distinction between an arbitrary optimization variable on a computer $y$, a physical state of the system $x,$ and the control of the physical state $u.$ In Section~\ref{sec:results}, $\zeta$ and $v$ are optimization variables on the computer for the physical state $x$ and control of the physical state $u$ respectively. The node $i^{b}$ of a branch-and-bound tree is denoted as $\mathcal{B}(i^{b})$, the branch-and-bound iteration limit at sample time $k$ is denoted $i^{b}_{k},$  $i^{b}_{0}$ denotes an initial feasible branch-and-bound iteration limit, the quadratic programming iteration limit at sample time $k$ is denoted $i^{qp}_{k},$  and $i^{qp}_{0}$ denotes an initial feasible quadratic programming iteration limit.  

\section{Preliminaries and Problem Statement}\label{sec:prelims}

This section presents (1) the main problem of interest to solve, (2) a general form for MIQP model predictive control (MPC) via a parametric MIQP formulation, and (3) a linear time-invariant dynamical system.

At a high level, this paper solves a parametric MIQP-MPC problem (Problem~\ref{prob:MIQP}) at each sampling time $k$ via an interpretable branch-and-bound algorithm (Algorithm~\ref{alg:bnb}) with user-desired branch-and-bound iterations limits ($i^{b})$ and quadratic programming iteration limit ($i^{qp}).$
Concisely, this paper solves the following problem

\begin{problem}\label{prob:low_compute} Solve a parametric MIQP-MPC (Problem~\ref{prob:MIQP}) via an interpretable branch-and-bound algorithm (Algorithm~\ref{alg:bnb}) with $i^{b}_{0}$, $i^{qp}_{0}$, to a user-desired compute limit via iteration limits and maintain closed loop asymptotic stability.
\end{problem}
From \cite{scokaert2002suboptimal}, a warm started feasible solution for MPC implies stability. Therefore, this paper defines a high and low iteration limit where the former finds a feasible solution and the latter is warm started by the high iteration limit solution. The goal of the low iteration limit is to provide a methodology to vary compute via solver limits for a user-desired tolerance.
In this paper, to solve Problem~\ref{prob:low_compute} a hybrid model is developed to switch between a high and low iteration limit for $i^{b}_{k}$ and $i^{qp}_{k}$ at each sample time. In hybrid systems terms, the low and high iteration limits are treated as a locally stable and globally asymptotic controller, respectively, defined further in Section~\ref{sec:hybrid_theory}.

Consider a continuous linear dynamical system of the form
\begin{equation}\label{eq:lin_dyn}
\dot{x}(t)= Ax(t)+Bu(t),
\end{equation}
where $x\in \mathbb{R}^{p}$ denotes the physical state, a sampling of $x(t)$ is defined as $x_{k}=x(k\Delta T)$ for the k-th fixed sampling time, $A\in \mathbb{R}^{q\times p}$, $B\in \mathbb{R}^{q\times r}$ denote state and control matrices, respectively, that take an explicit form in Section~\ref{sec:results},  $~u\in \mathbb{R}^{r}$ denotes the control of the physical system, and in this work $u$ is a feedback law obtained from solving an iteration limited version of Problem~\ref{prob:MIQP} with further details in Section~\ref{sec:hybrid_theory}.

\subsection{Parametric MIQP-MPC Formulation}
The MIQP-MPC of interest takes a compact parametric MIQP form to control Eq.~(\ref{eq:lin_dyn}) at sample time $k.$
\begin{problem}\label{prob:MIQP}
 \begin{align*}
    \psi(y^{*},x_{k})= ~&\underset{y}{\min}  \enskip y^{T}Qy+c^{T}y\\
    &\text{subject to} 
    \\
    &\qquad  y_{0}= x_{k},\\
    &\qquad  Cy \leq b,
    \end{align*}
where  $ \psi(y^{*},x_{k})$ denotes the objective function at the optimal solution $y^{*}$, $y \in \mathbb{R}^{n}$, $y_{1},...,y_{s}\in \mathbb{B}^{s}$ are binary variables that are part of the full vector $y$, $x_{k}$ is defined in Eq.~(\ref{eq:lin_dyn}) sampled at time $k$, $Q\in \mathbb{R}^{n\times n},~c\in \mathbb{R}^{n}$, $y_{0}\in \mathbb{R}^{p}$ is an optimization variable that updates every MPC loop with a new value from $x_{k}$, $C\in \mathbb{Q}^{m\times n},~b\in \mathbb{R}^{m}.$  
\end{problem}
\noindent Problem~\ref{prob:MIQP} is formulated in a parametric MIQP form, common for MPC and control barrier function MIQP formulations, e.g, online task allocation  \cite{notomista2021resilient}, signal temporal logic \cite{yang2020continuous}, and mixed-logical dynamical systems \cite{bemporad1999control}.

% The optimal mapping for Problem~\ref{prob:MIQP} can be reformulated to the following set value mapping
% \begin{equation}
%     \phi(x_{k}): \mathbb{R}^{p}\rightrightarrows\mathbb{R}^{n},
% \end{equation}
% where $\phi(x_{k})$ will be of importance in the hybrid systems analysis below.  

The feasible constraint Problem~\ref{prob:MIQP} can be reformulated to the following set value mapping
\begin{equation}\label{eq:const_set_value_map}
    h(x_{k},b): \mathbb{R}^{p+m}\rightrightarrows\mathbb{R}^{n},
\end{equation}
where $h(x_{k},b)$ is defined further for specific iteration limits in Section~\ref{sec:setvalue_iter_limit}. The feasible constraint mapping is defined over the feasible set next.

\begin{proposition}\label{prop:hybrid_conds_sat}
    The feasible constraint set value mapping, $h(x_{k},b),$ is continuous on the feasible parameter set, where the feasible parameter set is defined as
\[
\Gamma := \{ x_{k}\in \mathbb{R}^{p},~b\in \mathbb{R}^{m}~ \vert~ h(x_{k},b) \neq \emptyset\}.
\]
\end{proposition}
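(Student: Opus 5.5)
The plan is to split the mixed-integer feasible set into finitely many polyhedral pieces, one per binary pattern, and then verify outer and inner semicontinuity of $h$ on $\Gamma$ separately. Fix a binary pattern $\beta\in\mathbb{B}^{s}$. Define
\[
h_\beta(x_k,b):=\{y\in\mathbb{R}^{n}~\vert~ y_{0}=x_{k},~(y_{1},\dots,y_{s})=\beta,~Cy\le b\}.
\]
Then $h(x_k,b)=\bigcup_{\beta\in\mathbb{B}^{s}}h_\beta(x_k,b)$ is a union of finitely many sets. Each $h_\beta$ is a polyhedral set-valued map in which the parameters $(x_k,b)$ appear only on the right-hand side of linear equalities and inequalities. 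Its domain $\Gamma_\beta$ is therefore a closed polyhedron, and $\Gamma=\bigcup_\beta\Gamma_\beta$.

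\textbf{Step 1 (outer semicontinuity).} The graph of $h_\beta$ is a closed polyhedron in $\mathbb{R}^{p+m}\times\mathbb{R}^{n}$. Hence the graph of $h$, a finite union of these, is closed. Because $h$ is locally bounded on bounded parameter sets after restricting to bounded $y$, this gives outer semicontinuity in the sense of \cite{sanfelice2021hybrid} at every point of $\Gamma$.

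\textbf{Step 2 (inner semicontinuity of each piece).} Since $C\in\mathbb{Q}^{m\times n}$ and the constraint data are fixed, Hoffman's error bound applies to each $h_\beta$. It gives a constant $L_\beta$ such that for $(x,b),(x',b')\in\Gamma_\beta$ and every $y\in h_\beta(x,b)$,
\[
\operatorname{dist}\bigl(y,h_\beta(x',b')\bigr)\le L_\beta\,\|(x-x',b-b')\|.
\]
Thus each $h_\beta$ is Lipschitz continuous in the Hausdorff sense on its own domain. Rationality of $C$ is what allows us to treat the mixed-integer feasible region as a finite union of rational polyhedra, for which this machinery is standard.

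\textbf{Step 3 (inner semicontinuity of the union).} Take $(x^{j},b^{j})\to(x,b)$ with all points in $\Gamma$, and take $y\in h(x,b)$, so that $y\in h_\beta(x,b)$ for some $\beta$. We must produce $y^{j}\in h(x^{j},b^{j})$ with $y^{j}\to y$.
\begin{itemize}
\item Along the subsequence where $(x^{j},b^{j})\in\Gamma_\beta$, Step 2 supplies such $y^{j}$ directly.
\item The difficulty is the complementary subsequence, where the parameters are feasible only for other patterns $\beta'$.
\end{itemize}

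This is the step I expect to be the main obstacle. My first attempt exploits the MPC structure: $b$ enters only as a right-hand side and $y_0=x_k$ is an equality. I would try to show that the patterns $\beta$ feasible at $(x,b)$ remain feasible on a relative neighborhood of $(x,b)$ in $\Gamma$, i.e.\ that $\Gamma_\beta$ contains $\Gamma$ locally near $(x,b)$. To do this I would argue via a slack/active-set decomposition of $Cy\le b$, showing that a perturbation preserving overall feasibility also preserves feasibility of each active binary slice.

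If that local-containment property fails for the generic data $(C,b)$, the argument must be closed using the specific constraint structure of Problem~\ref{prob:MIQP} as it is later instantiated. That structure is the dynamics equalities, box bounds and big-$M$ couplings in Section~\ref{sec:results}, and under it I would verify the containment directly, pattern by pattern. Combining Steps 1 and 3 then yields continuity of $h$ on $\Gamma$.
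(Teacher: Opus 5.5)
Your Steps~1 and~2 are sound. The graph of $h$ is a finite union of closed polyhedra, so $h$ is outer semicontinuous, and Hoffman's bound makes each $h_\beta$ Lipschitz on $\Gamma_\beta$. Step~3, however, is not an obstacle that a slack or active-set argument can overcome. The local-containment property you aim for is false in general, and given Steps~1--2 it is actually \emph{equivalent} to the proposition. The binary coordinates live in the discrete set $\mathbb{B}^{s}$, so any sequence $y^{j}\to y$ with $y\in h_\beta(x,b)$ must satisfy $(y^{j}_{1},\dots,y^{j}_{s})=\beta$ for all large $j$. Hence $h$ is inner semicontinuous at $(x,b)$ if and only if every pattern feasible at $(x,b)$ remains feasible on a relative neighborhood of $(x,b)$ in $\Gamma$. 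That makes every $\Gamma_\beta$ relatively open and closed in $\Gamma$, i.e., the set of feasible binary patterns must be constant on each connected component of $\Gamma$. This already fails with integer data. Take $y=(y_{0},y_{1})$ with $y_{1}\in\mathbb{B}$ and the single constraint $-y_{0}+y_{1}\le b$. Then $\Gamma=\Gamma_{0}=\{x+b\ge 0\}$ while $\Gamma_{1}=\{x+b\ge 1\}$. At $(x,b)=(1,0)$, the point $(1,1)\in h(1,0)$ is at distance at least $1$ from $h(1-1/j,0)=\{(1-1/j,0)\}$ for every $j$, although $(1-1/j,0)\in\Gamma$. The rationality of $C$ that you lean on does no work here. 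Nor is it needed in Steps~1--2: with bounded binaries the finite-union decomposition is automatic, and Hoffman's bound holds for real matrices.

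Your fallback of checking containment ``pattern by pattern'' on the instances of Section~\ref{sec:results} fails for the same reason. In Example~\ref{prob:min_thrust_MIQP}, every feasible point with $z\equiv 0$ has $v=0$, and such points exist exactly when $A^{N}x\in Z_{\text{terminal}}$. Consider any boundary point of that set that is approached by feasible states where some $z_k=1$ is required. There a coasting solution cannot be approximated, because its $z$-coordinates differ from those of every nearby feasible point, so $h$ is not inner semicontinuous. What is missing is therefore a hypothesis rather than a sharper argument. The paper does not prove the proposition directly; it appeals to rational data plus unspecified ``additional mild technical conditions'' from \cite[Lemma~4]{fina2025hybrid}. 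Whatever those conditions are, they must enforce local constancy of the feasible patterns on $\Gamma$. To get a correct statement you have two options. You can impose that condition explicitly, and then your Steps~2--3 close the proof immediately. Or you can weaken the conclusion to outer semicontinuity, which your Step~1 already establishes. In that case, drop the local-boundedness remark in Step~1: closedness of the graph is all that is needed, and $h$ need not be locally bounded for general $C$.
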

In brief, for rational constraint and objective function matrices with additional mild technical conditions, the constraint set mapping is well behaved \cite{fina2025hybrid}[Lemma 4].

\subsection{Iteration Limited Constraint Mappings}\label{sec:setvalue_iter_limit}
We model the feasible constraint mapping in Eq.~(\ref{eq:const_set_value_map}) for both iteration limited cases via a local and global controller framework, denoted $h_{0}$ and $h_{1}$. The two iteration limits of interest are the branch-and-bound iteration limit and quadratic programming iteration limit. It is outside the scope of this work, but the hybrid model framework in Section~\ref{sec:hybrid_theory} is general enough for nontraditional solver limits, e.g., memory limit \cite{fina2025hybrid}. 

Since the constraint vector $b$ and iteration limits are constant, the feasible constraint mapping for the branch-and-bound iteration limited case is defined in concise notation as
\begin{align}
    h^{b}_{0}(x_{k}):=h(x_{k},b,\underline{i}^{b},i^{qp}_{0}),\\
    h^{b}_{1}(x_{k}):=h(x_{k},b,\bar{i}^{b},i^{qp}_{0}),
\end{align}
where $\underline{i}^{b},~\bar{i}^{b}$ denote the low and high branch-and-bound iteration limit, respectively. The feasible constraint mapping for the quadratic programming iteration limited case is defined as
\begin{align}
    h^{qp}_{0}(x_{k}):=h(x_{k},b,i^{b}_{0},\underline{i}^{qp}),\\
    h^{qp}_{1}(x_{k}):=h(x_{k},b,i^{b}_{0},\bar{i}^{qp}),
\end{align}
where $\underline{i}^{qp},~\bar{i}^{qp}$ denote the low and high quadratic programming iteration limit, respectively. 

For notational simplicity in the rest of this work, the local controllers $h_0^b (x_k),~h_0^{qp}(x_k)$ and global controllers $h_1^b (x_k),~h_1^{qp}(x_k)$ are denoted $h_{0}(x_{k})$ and $h_{1}(x_{k})$ for both iteration limited cases, as the switching logic analysis via hybrid system theory is the same for either iteration limit. Even though the hybrid analysis is the same, results for the different behaviors they induce are reflected in Section~\ref{sec:results}.

\section{Hybrid Systems Model}\label{sec:hybrid_theory}
In this section, Problem~\ref{prob:low_compute} is formulated into a hybrid system model with the iteration limited controllers as a ``united'' controller \cite{sanfelice2021hybrid}[Chapter 4]. We denote a hybrid state $\chi(t,j)$, where $t$ are flow times (i.e., time arcs where the system flows continuously) and $j$ are jump times (i.e., discontinuities where the hybrid system jumps states). A general hybrid system is defined as $\mathcal{H}=(C,F,D,G)$ with

\begin{align*}
&\chi \in C, \quad \dot{\chi} \in  F(\chi),\\
&\chi \in D, \quad \chi^{+} \in G(\chi),
\end{align*}
where $\chi\in \mathbb{R}^{s}$ denotes the hybrid system state, $F$ denotes the set value flow map, $C$ denotes the flow set, $G$ denotes the set value jump map, $D$ denotes the jump set \cite{goebel2009hybrid}[Theorem 6.8], and $\dot{\chi},~\chi^{+}$ denote the evolution of the hybrid state in the flow and jump map, respectively. We model controlling the dynamical system with iteration limited MPC (Problem~\ref{prob:MIQP}) in continuous time and switching between the iteration limited MPC controllers in discrete time. In hybrid system terminology, this means the controlled dynamics, $\dot{x}=Ax+Bu$, flow and the switching law jumps. 

Regularity, i.e., conditions for a well behaved hybrid system,  can be defined by the hybrid basic conditions,
\begin{definition} (Hybrid Basic Conditions)\label{def:hybrid_conds}
\cite{goebel2009hybrid}
\begin{enumerate}
    \item $C$ and $D$ are closed subsets of $\mathbb{R}^{s}$;
    \item The flow map $F$ is outer semi-continuous and locally bounded relative to $C$, $C\subset \text{dom} F$, and $F(\chi)$ is convex for every $\chi\in C$;
    \item The jump map $G$ is outer semi-continuous and locally bounded relative to $D$, $D\subset \text{dom} G$.
\end{enumerate}
\end{definition}

The hybrid system, $\mathcal{H}_{s},$ presented next unites the local and global controllers defined in Section~\ref{sec:setvalue_iter_limit}. The linear dynamics defined in Eq.~(\ref{eq:lin_dyn}) are controlled via the united controller. The hybrid system model $\mathcal{H}_{s}$ models sufficient criteria for stable switching without exhaustively modeling all possible control features like sampling time and model mismatch because hybrid systems under sufficient conditions can treat these control features as perturbations to $\mathcal{H}_{s}$.

The flow map and flow set for $\mathcal{H}_{s}$ are defined as
\begin{align}\label{eq:flow_dynamics}
F(\chi):=\begin{bmatrix} \dot{x} = Ax+B h_{q}\big(x\big) \\ \dot{q} = 0\end{bmatrix} \qquad \chi \in C:= C_{0} \cup C_{1}.
\end{align}
The jump map and jump set are defined as
\begin{align}
G(\chi):= \begin{bmatrix}x^{+} = x \\
q^{+} = 1- q\end{bmatrix} \qquad \chi \in D:= D_{0} \cup D_{1}.
\end{align}

The sets $C_{0},~C_{1},~D_{0},$ and $D_{1}$ are defined as
\begin{align}
    C_{0}:= \mathcal{U}_{0} \times \{ 0\},~ C_{1}:= \mathcal{T}_{1} \times \{ 1\}, \\
    D_{0}:= \mathcal{T}_{0} \times \{ 0\},~ D_{1}:= \mathcal{T}_{1} \times \{ 1\},
   \end{align}
where the hybrid state is defined as $\chi:=(x,q)\in \mathbb{R}^{p}\times Q$, $Q\in \{ 0,1\},$ and $\mathcal{U}_{0},~\mathcal{T}_{0},~\mathcal{T}_{1}$ are defined further in Section~\ref{sec:supervisor_control}.

The stabilizing set for $\mathcal{H}_{s}$ is defined as
\[
\mathcal{A}:= X \times  \{ 0 \},
\]
where $X$ is a compact set of equilibrium points when controlled by solving Problem~\ref{prob:MIQP} and $\{ 0 \}$ denotes the switching controller equilibrium, i.e., switching to the local controller $h_{0}$. The switch between $h_{0}$ and $h_{1}$ is controlled via a supervisory algorithm defined by Lyapunov functions that measure deviation from the set $\mathcal{A}$ with further details in Section~\ref{sec:supervisor_control}. The control logic of $\mathcal{H}_{s}$ is illustrated in Figure~\ref{fig:feedback_law_concept}.

\begin{figure}[!ht]

\centering
\begin{tikzpicture}[auto, node distance=2cm,>=latex']
    \node [block] (feasIter) {MPC Update with High Iteration Limit};
    \node [block, above of=feasIter] (plant) {$\dot{x}=Ax+Bu$};
     \node [block, below of=feasIter] (infeasIter) {MPC Update with Low Iteration Limit};

  \path [line] (feasIter) --++ (-3.25cm,0cm) |- (plant);
  \path [line] (infeasIter) --++ (-3.25cm,0cm) |- (plant);

   \path [line] (plant)--++ (3cm,0cm)  |- (feasIter);
   \path [line_label] (feasIter)-- node{$q=1$} (3cm,0cm) ;
\path [line] (plant)  --++ (3cm,0cm)|- (infeasIter) ;
\path [line_label] (infeasIter)-- node{$q=0$} (3cm,-2cm);

\path[line_label](feasIter) -- node{$u=h_{1}(x)$} (-3.25cm,0cm);

\path[line_label](infeasIter) -- node{$u=h_{0}(x)$} (-3.25cm,-2cm);

\end{tikzpicture}
\caption{Feedback Loop for Hybrid Uniting Control Law}
\label{fig:feedback_law_concept}
\end{figure}
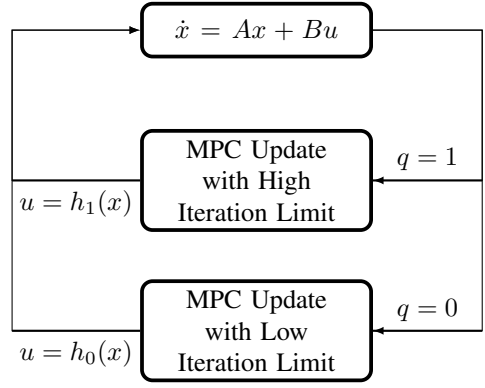

\subsection{United Control Law}\label{sec:supervisor_control}

Two Lyapunov-based measures of $\mathcal{H}_{s}$ converging to the stabilizing set $\mathcal{A}$ are objective function and constraint based where the first optimization-based term occurs at specific sampling times and the second term is physical state based. The optimization updates can be modeled as occurring in discrete jump time at specific sample times as done in \cite{fina2025hybrid}, but it does not affect the analysis of switching iteration limited controllers so it is omitted. An objective function based Lyapunov function can be defined as
\begin{equation}
    V_{obj}(x)=\theta\vert \psi(y^{*},x_{k})-\psi(y^{*},x_{k-1}) \vert + x^{T}\Gamma x,
\end{equation}
 where $y^{*}$ is the optimum of solving Problem~\ref{prob:MIQP} evaluated at sample time $k$ and $k-1$ respectively, $x$ denotes the physical state as defined in Eq.~(\ref{eq:lin_dyn}), $x_{k}$ denotes the sampled $x$,  $\Gamma:=\sigma I_{p}$ and $~\sigma \in \mathbb{R}_{+}, $ denotes a penalty term. Using the objective function as a measure of stability for MPC as done with $V_{obj}$ is a classic method to prove stability of a MPC control law \cite{mayne2000constrained}. However, a sufficient condition for MPC stability that aligns with the hybrid model in $\mathcal{H}_{s}$ is constraint based \cite{scokaert2002suboptimal}. A feasible constraint based Lyapunov function is defined as
\begin{equation}
    V_{feas}(x) = \theta \vert \vert Cy^{*}-\bar{b} \vert \vert_{\infty} + x^{T}\Gamma x,
\end{equation}
where $\theta \in \mathbb{R}_{+}$ denotes a penalty term, $y^{*}$ denotes the solution to Problem~\ref{prob:MIQP} evaluated at $x_{k}$, $\bar{b}:=[x_{k}^{T}~~b^{T}]^{T},$ and the $\ell^{\infty}$ norm is a practical termination criteria to measure feasibility in optimization solvers \cite{stellato2020osqp}.

Next, $V_{obj}$ and $V_{feas}$ are used to construct the switching law control sets for $\mathcal{H}_{s},$ specifically $\mathcal{T}_{0},~\mathcal{T}_{1}, $ and $\mathcal{U}_{0}$ are defined as
\begin{align}
    \mathcal{T}_{0}:=\{  V_{p}(x)  \leq c_{p,0}\}, \label{eq:T0_lyap}\\
     \mathcal{U}_{0}:= \{   V_{p}(x)  \leq c_{p,1}\},\label{eq:U0_lyap}\\
    \mathcal{T}_{1}:= \{V_{p}(x)  \geq c_{p,1}\} \label{eq:T1_lyap},
\end{align}
where $~c_{p,0}<c_{p,1}$, $p\in \{ obj,feas\}$ and the set $\mathcal{U}_{0}$ reduces Zeno behavior when switching between $h_{0}$ and $h_{1}$.

\subsection{Stability Analysis}\label{sec:stability_analysis}

Asymptotic stability of $\mathcal{H}_{s}$ to $\mathcal{A}$ is presented under the next assumption.
\begin{assumption}\label{assum:asym_control}\cite{sanfelice2021hybrid}[Assumption 4.3] Given $x^{*}\in \mathbb{R}^{p}$ and the continuous dynamics, there exists a closed set $\mathcal{U}_{0}\subset \mathbb{R}^{p},$
which contains an open neighborhood of $x^{*},$ and a closed set $\varepsilon_{0}\subset \mathbb{R}^{p}$ such that 
\begin{enumerate}
    \item A state-feedback law $h_{1}: \mathbb{R}^{p} \to \mathbb{R}^{r}$ such that the dynamics controlled by $h_{1}$ is such that $x^{*}$ is asymptotically stable with basin of attraction containing $\mathcal{U}_{0},$
    \item A state-feedback law $h_{0}: \mathbb{R}^{p} \to \mathbb{R}^{r}$ guaranteeing that every solution $x$ to the linear dynamics controlled by $h_{0}$ starting from $\mathcal{U}_{0} \setminus \varepsilon_{0}$ reaches $\varepsilon_{0}$ after a finite amount of flow time or converges to it as $t$ tends to $\infty,$
    \item There exist positive constants $\delta_{0},~\delta^{c}_{0}$ and a closed set $\mathcal{T}_{0}$ satisfying
    \[
    \varepsilon_{0}+\delta^{c}_{0}\mathbb{N} \subset \mathcal{T}_{0} \qquad \mathcal{T}_{0}+2\delta_{0}\mathbb{N}\subset \mathcal{U}_{0}.
    \]
\end{enumerate}

\end{assumption}
The set $\mathcal{T}_{1}$ in Eq.~(\ref{eq:T1_lyap}) is equivalently defined as $\mathcal{T}_{1}=\mathbb{R}^{p} \setminus \mathcal{T}_{0}$ and by definition could lead to Zeno behavior on the boundary between the Lyapunov function sets $\mathcal{T}_{1}$ and $\mathcal{T}_{0}$ without a buffer set. Therefore, the set $\mathcal{U}_{0}$ is a user-desired buffer set between $\mathcal{T}_{0}$ and $\mathcal{T}_{1}$ to prevent Zeno behavior. At a high level, $\varepsilon_{0}$ is a set around the solution $x^{*}$ to allow for minor perturbations from the exact $x^{*}$ value. 
Figure~\ref{fig:concept_switching_law_sets} illustrates the relationship between, the sets, $\mathcal{T}_{0},$ $\mathcal{U}_{0},$ and $\varepsilon_{0}$.

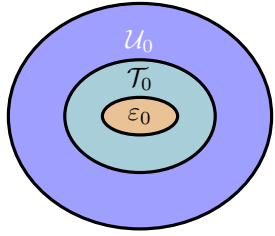
\begin{figure}[!ht]
\centering
\begin{tikzpicture}

  % \fill[fill=orange!90, fill opacity = 0.5] (0,0) ellipse (2.5 and 2.0);
  \filldraw[color=black, fill=blue!50, fill opacity = 0.75, very thick] (0,0) ellipse (1.75 and 1.5);
  \filldraw[color=black,fill=green!30, fill opacity = 0.5, very thick] (0,0) ellipse (1.0 and 0.75);
  \filldraw[color=black, fill=orange!50, fill opacity = 0.75,very thick] (0,0) ellipse (0.5 and 0.25);
  
  % Add labels
  \node[black] at (0,0) {$\varepsilon_{0}$};
  \node[black] at (0,0.5) {$\mathcal{T}_{0}$};
  \node[white] at (0,1) {$\mathcal{U}_{0}$};
  % \node[black] at (0,1.75) {$\mathcal{T}_{1}$};
  
\end{tikzpicture}
\caption{Supervisory Law Sets}
\label{fig:concept_switching_law_sets}
\end{figure}

Assumption~\ref{assum:asym_control}.1 is reasonable for MPC when the terminal set is well formulated \cite{mayne2000constrained}. The MPC controller with a high iteration limit is $h_{1}.$  Assumption~\ref{assum:asym_control}.2 is reasonable when the MPC controller is warm started with a feasible initial solution, which is true by control law construction when $h_{1}$ warm starts $h_{0}$ \cite{scokaert2002suboptimal}.
Assumption~\ref{assum:asym_control}.3 is a technical condition to ensure the switching sets are well defined and future work will define weaker theoretical conditions.

Before proving asymptotic stability of $\mathcal{H}_{s}$ to $\mathcal{A},$ a sufficient condition for solutions to $\mathcal{H}_{s}$ to be well behaved is when $\mathcal{H}_{s}$ satisfies Definition~\ref{def:hybrid_conds}.
\begin{lemma}\label{lem:basic_conditions} Let Assumption~\ref{assum:asym_control}, hold. Then $\mathcal{H}_{s}$ satisfies the hybrid basic conditions for the constraint mapping defined in Proposition~\ref{prop:hybrid_conds_sat}.
\end{lemma}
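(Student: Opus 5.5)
We verify the three items of Definition~\ref{def:hybrid_conds} in turn, using the structure of $\mathcal{H}_{s}$, the switching sets (\ref{eq:T0_lyap})--(\ref{eq:T1_lyap}), and the regularity of the constraint mapping from Proposition~\ref{prop:hybrid_conds_sat}. Throughout, $Q=\{0,1\}$ is a finite, hence closed, set. Each of $C_{0},C_{1},D_{0},D_{1}$ is then a product of a subset of $\mathbb{R}^{p}$ with a singleton, so closedness of $C$ and $D$ reduces to closedness of $\mathcal{U}_{0},\mathcal{T}_{0},\mathcal{T}_{1}$.

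\textbf{Item 1 (closed sets).} Assumption~\ref{assum:asym_control} directly supplies $\mathcal{U}_{0}$ and $\mathcal{T}_{0}$ as closed sets. For $\mathcal{T}_{1}$, the plan is to argue that $\mathcal{T}_{1}$ is a superlevel set $\{V_{p}(x)\geq c_{p,1}\}$ of a continuous function, so it is closed.
\begin{itemize}
\item The term $x^{T}\Gamma x$ is continuous.
\item The term $\theta\|Cy^{*}-\bar b\|_{\infty}$, and likewise $\theta|\psi(y^{*},x_{k})-\psi(y^{*},x_{k-1})|$ in $V_{obj}$, depends on $x$ only through the solution selected from $h_{q}(x)$.
\end{itemize}
By Proposition~\ref{prop:hybrid_conds_sat}, $h$ is continuous on $\Gamma$. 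Continuity of the optimal value and of the residual on $\Gamma$ then follows from a Berge-type maximum theorem, since the objective is a continuous quadratic and the feasible set mapping is continuous, compact-valued on bounded parameter sets. Hence $V_{p}$ is continuous on $\Gamma$, and each set in (\ref{eq:T0_lyap})--(\ref{eq:T1_lyap}) is a sublevel or superlevel set, therefore closed relative to $\Gamma$. A finite union of closed sets is closed. Note that $\mathcal{U}_{0}\cup\mathcal{T}_{1}$ covers the feasible state set, so $C\cup D$ covers every state of interest.

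\textbf{Item 2 (flow map).} For each fixed $q$, $F(\chi)=(Ax+Bh_{q}(x),0)$. Since $h_{q}$ is a continuous set-valued map on $\Gamma$, it has a closed graph there, so it is outer semicontinuous. It is also locally bounded, because the bounded constraint polyhedron (with integer components ranging over the finite set $\mathbb{B}^{s}$) keeps images of compact parameter sets bounded. Affine composition preserves both properties, and dependence on $q$ is trivial because $q$ ranges over a discrete set. Assumption~\ref{assum:asym_control} treats $h_{0},h_{1}$ as single-valued feedback laws, so $F(\chi)$ is a singleton and hence convex. If set-valued selections are needed, replace $F$ by its closed convex hull, which does not alter solutions in the Krasovskii sense. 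Finally, $C\subset\mathrm{dom}\,F$ holds because $C_{0},C_{1}$ lie in the feasible parameter set, where $h_{q}\neq\emptyset$.

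\textbf{Item 3 (jump map).} $G(x,q)=(x,1-q)$ is single-valued and continuous, so it is outer semicontinuous and locally bounded everywhere, and $D\subset\mathrm{dom}\,G=\mathbb{R}^{p}\times Q$.

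The main obstacle is Item~1, specifically the continuity of $V_{p}$. It requires that the solution-dependent terms vary continuously even though $y^{*}$ comes from a mixed-integer program, whose argmin may jump. The approach I would try is to work with value functions and residuals rather than argmins. Over each of the finitely many integer assignments, the residual and value are continuous by Proposition~\ref{prop:hybrid_conds_sat} and \cite{fina2025hybrid}[Lemma 4]. Taking the minimum over finitely many continuous functions preserves continuity, at least lower semicontinuity, which already suffices for closedness of the sublevel sets $\mathcal{T}_{0},\mathcal{U}_{0}$. For $\mathcal{T}_{1}$, if only lower semicontinuity is available, I would instead define $\mathcal{T}_{1}$ as the closure of $\mathbb{R}^{p}\setminus\mathcal{T}_{0}$, which is legitimate given the buffer condition $\mathcal{T}_{0}+2\delta_{0}\mathbb{N}\subset\mathcal{U}_{0}$ in Assumption~\ref{assum:asym_control}.3.
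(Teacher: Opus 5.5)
Your Items 2 and 3 follow the paper's proof, which simply asserts that $F$ and $G$ are continuous by construction and that $h_{0},h_{1}$ are continuous by Proposition~\ref{prop:hybrid_conds_sat}. Your headline argument for Item 1, however, has a step that fails. Berge's maximum theorem gives continuity of the optimal value and outer semicontinuity of the argmin map. It does not make $x\mapsto\|Cy^{*}-\bar b\|_{\infty}$ continuous at a selected minimizer: $y^{*}$ can jump between integer assignments, and because $Q$ is only positive semidefinite it need not be unique even for fixed binaries. The finite-minimum repair does not help. The residual at the MIQP minimizer is not the minimum of the per-assignment residuals. Also, the $V_{obj}$ term $|\psi(y^{*},x_{k})-\psi(y^{*},x_{k-1})|$ depends on the previous sample, so it is not a function of the current $x$. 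Moreover, lower semicontinuity closes sublevel sets, whereas $\{V_{p}\ge c_{p,1}\}$ is a superlevel set and needs upper semicontinuity. Since $\mathcal{T}_{0}$ and $\mathcal{U}_{0}$ are already closed by hypothesis, the $V_{p}$-continuity detour buys nothing.

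What actually closes Item 1 is your fallback $\mathcal{T}_{1}:=\overline{\mathbb{R}^{p}\setminus\mathcal{T}_{0}}$, and this is the paper's route in corrected form. The paper declares $\mathcal{T}_{1}=\mathbb{R}^{p}\setminus\mathcal{T}_{0}$ and infers closedness from closedness of $\mathcal{T}_{0}$. That complement is open, so the closure (as in the uniting construction of \cite{sanfelice2021hybrid}) is needed. Some redefinition is unavoidable anyway, since $\{V_{p}\ge c_{p,1}\}$ is not the complement of $\{V_{p}\le c_{p,0}\}$ when $c_{p,0}<c_{p,1}$. Lead with that and drop the rest.

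In Item 2, local boundedness via a ``bounded constraint polyhedron'' uses a hypothesis stated nowhere in Problem~\ref{prob:MIQP} or Proposition~\ref{prop:hybrid_conds_sat}. It is also unnecessary if you treat $h_{q}$ as Assumption~\ref{assum:asym_control} does, as a single-valued continuous feedback: then $F$ is continuous and singleton-valued, and Item 2 is immediate. Replacing $F$ by $\overline{\mathrm{co}}\,F$ is not free either, because it enlarges the solution set and Assumption~\ref{assum:asym_control} would then have to hold for the added solutions. Finally, you, like the paper, take continuity of the feedback from Proposition~\ref{prop:hybrid_conds_sat}, which concerns the feasible-set map. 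Continuity of that set-valued map does not make a particular iteration-limited selection from it continuous, so this is an extra hypothesis in both arguments.
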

\begin{IEEEproof}
The set $\mathcal{T}_{1}$ in Eq.~(\ref{eq:T1_lyap}) is equivalently defined in terms of $\mathcal{T}_{0}$ as $\mathcal{T}_{1}= \mathbb{R}^{p} \setminus \mathcal{T}_{0}.$ By construction and closedness of $\mathcal{T}_{0}$ from Assumption~\ref{assum:asym_control}.3,  the sets $C_{0},~C_{1},~D_{0},$ and $D_{1}$ are closed. Then, $C$ and $D$ are closed because they are defined as the finite union of closed sets. By construction, the maps $G$ and $F$ are continuous. Finally, the output map $h$ is continuous since $h_{0}$ and $h_{1}$ are continuous constraint set value mappings of $(x,q)$ as defined in Proposition~\ref{prop:hybrid_conds_sat}. 
\end{IEEEproof}

Next, Lemma~\ref{lem:basic_conditions} is used to prove asymptotic stability of $\mathcal{H}_{s}$ to the set $\mathcal{A}.$

\begin{proposition}\label{prop:stable_hybrid_system}
    Let Assumption~\ref{assum:asym_control} hold, then the set $\mathcal{A}$ is globally asymptotically stable and robust in the hybrid sense additionally every maximal solution to $\mathcal{H}_{s}$ from $C \cup D$ is complete.
\end{proposition}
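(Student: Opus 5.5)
The plan is to reduce Proposition~\ref{prop:stable_hybrid_system} to the uniting control theorem of \cite{sanfelice2021hybrid}[Chapter 4], whose hypotheses are exactly Assumption~\ref{assum:asym_control} together with the hybrid basic conditions. Lemma~\ref{lem:basic_conditions} already gives that $\mathcal{H}_{s}$ satisfies Definition~\ref{def:hybrid_conds}. For the data of $\mathcal{H}_{s}$ I would use $C_0=\mathcal{U}_0\times\{0\}$, $D_0=\mathcal{T}_0$-type sets, $C_1=D_1=\mathcal{T}_1\times\{1\}$ and $G$ toggling $q$, and check that these coincide with the uniting construction there. In that construction the jump set for $q=0$ is the complement of $\mathcal{U}_0$ shrunk by the buffer. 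So I will first reconcile the sets: the jump from $q=0$ must trigger when $x$ leaves $\mathcal{U}_0$, and the jump from $q=1$ when $x$ enters $\mathcal{T}_0$. I expect to relabel the sets accordingly, with closures taken so that the sets stay closed. I will then record the two facts the cited theorem needs: $\mathcal{A}$ is compact, since $X$ is compact and $\{0\}$ is finite, and $G(D)\subset C\cup D$.

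The stability argument proceeds by modes.

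\textbf{Case $q=1$.} Off $\mathcal{T}_0$, Assumption~\ref{assum:asym_control}.1 implies solutions flow under $h_1$ and converge toward $x^{*}$, whose basin contains $\mathcal{U}_0\supset\mathcal{T}_0\supset\varepsilon_0+\delta_0^{c}\mathbb{N}$. Hence every such solution reaches $\mathcal{T}_0$ in finite flow time and jumps to $q=0$.

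\textbf{Case $q=0$.} Solutions start in $\mathcal{T}_0$, which is at distance $2\delta_0$ from the boundary of $\mathcal{U}_0$ by Assumption~\ref{assum:asym_control}.3. By Assumption~\ref{assum:asym_control}.2, solutions under $h_0$ reach or converge to $\varepsilon_0$. The crux is to show they remain in $\mathcal{U}_0$, so that no further jump occurs. For this I would use the Lyapunov sublevel structure: $\mathcal{T}_0$ and $\mathcal{U}_0$ are sublevel sets of $V_p$, and $V_p$ is nonincreasing under the warm-started $h_0$ \cite{scokaert2002suboptimal}. It follows that at most two jumps occur.

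Global attractivity then follows. Local stability comes from the $q=0$ invariance near $\mathcal{A}$ combined with stability of $x^{*}$ under $h_1$ (and $h_0$) inside $\mathcal{U}_0$.

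Completeness of maximal solutions follows from the viability condition in the hybrid basic results \cite{goebel2009hybrid}. Every point of $C\setminus D$ admits flow, because the linear flow $\dot x=Ax+Bh_q(x)$ is locally bounded with $h_q$ continuous by Proposition~\ref{prop:hybrid_conds_sat}. Finite escape is ruled out by boundedness of sublevel sets of $V_p$ and by the convergence properties above. Jumps never leave $C\cup D$. Zeno behavior is excluded by the $2\delta_0$ buffer, which forces a positive flow time between consecutive jumps.

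Robustness in the hybrid sense is then immediate. A compact set that is asymptotically stable for a hybrid system satisfying the basic conditions is semiglobally practically robustly KL-asymptotically stable under small perturbations \cite{goebel2009hybrid}[Theorem 6.8 and Chapter 7].

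The main obstacle is the $q=0$ invariance step. Assumption~\ref{assum:asym_control}.2 gives only eventual convergence to $\varepsilon_0$, not forward invariance of $\mathcal{U}_0$, so excluding chattering and exit from $\mathcal{U}_0$ needs more. I would first try the monotonicity of $V_{feas}$ under a feasible warm start. Failing that, I would fall back on the structure of the uniting theorem in \cite{sanfelice2021hybrid}, which tolerates finitely many exits because each exit lands in $\mathcal{T}_1$, where $h_1$ restores convergence.
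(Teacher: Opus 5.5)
Your opening reduction is exactly the paper's proof. The paper's proof is a one-line appeal to \cite[Theorem~4.6]{sanfelice2021hybrid}, with the hybrid basic conditions supplied by Lemma~\ref{lem:basic_conditions}. Your remark that the jump sets must be relabeled (leave $q=0$ on exiting $\mathcal{U}_0$, leave $q=1$ on entering $\mathcal{T}_0$) to match the uniting construction is a point the paper passes over: as written, $D_0=\mathcal{T}_0\times\{0\}$ and $C_1=D_1$.

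The problem is the mode-by-mode argument you build on top of the reduction, which does not follow from Assumption~\ref{assum:asym_control} as stated. In Case $q=1$ you use Assumption~\ref{assum:asym_control}.1 to conclude that every $h_1$-solution off $\mathcal{T}_0$ converges to $x^{*}$. That item only says the basin of $x^{*}$ contains $\mathcal{U}_0$, so it is silent about initial states outside $\mathcal{U}_0$ --- precisely the ones that matter for \emph{global} attractivity. Some reachability property of the mode-$1$ controller from all of $\mathbb{R}^{p}$ is logically necessary here, because a mode-$1$ solution that never enters $\mathcal{T}_0$ never approaches $\mathcal{A}=X\times\{0\}$. Your argument must either assume that property or import it explicitly from the cited theorem's hypotheses.

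In Case $q=0$, Assumption~\ref{assum:asym_control}.2 is purely an attractivity statement toward $\varepsilon_0$. It gives neither forward invariance of $\mathcal{U}_0$ nor Lyapunov stability under $h_0$. The monotonicity of $V_p$ along $h_0$-solutions that you lean on is not a hypothesis of the proposition and is proved nowhere in the paper. Nor does it follow from \cite{scokaert2002suboptimal}, whose decrease property concerns the MPC value function rather than $V_{obj}$ (a cost difference plus $\sigma\|x\|_2^2$) or $V_{feas}$ (a constraint residual plus $\sigma\|x\|_2^2$). Without it, three of your steps lose their support:
\begin{itemize}
\item The ``at most two jumps'' count fails.
\item The local stability claim fails: near $\mathcal{A}$ the flow is driven by $h_0$, so stability under $h_1$ is irrelevant, and stability under $h_0$ is not given.
\item Your fallback that the uniting theorem ``tolerates finitely many exits'' is unargued. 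Each exit from $\mathcal{U}_0$ hands the state to $h_1$, which brings it back into $\mathcal{T}_0$, and nothing you have stated stops this cycle from repeating forever.
\end{itemize}

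To close the proof you have two options. Either discard the hand-made argument and, as the paper does, rest entirely on checking the hypotheses of the cited uniting theorem. Or state the missing ingredients explicitly: a genuine decrease of $V_p$ along $h_0$ (making the sublevel sets $\mathcal{T}_0\subset\mathcal{U}_0$ forward invariant), and global reachability of $\mathcal{T}_0$ under $h_1$.
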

\begin{IEEEproof}
    For brevity, the proof follows directly \cite{sanfelice2021hybrid}[Theorem 4.6] with the change that the hybrid basic conditions are satisfied by Lemma~\ref{lem:basic_conditions}.
\end{IEEEproof}

Now that asymptotic stability and robustness have been theoretically established, the rest of this work constructs algorithms to solve Problem~\ref{prob:MIQP} and implement the supervisory control law in $\mathcal{H}_{s}.$

\section{Algorithms}\label{sec:algs}
  In this section, an interpretable branch-and-bound solver with an explicit pruning algorithm and then an implementable uniting control algorithm is presented. The notation $\tilde{y}$ is introduced to denote the best current solution in Algorithm~\ref{alg:bnb} and Algorithm~\ref{alg:prune} evaluted at $x_{k}$.

\subsection{Branch-and-Bound Preliminaries}
Problem~\ref{prob:MIQP} is solved via a branch-and-bound algorithm (Algorithm~\ref{alg:bnb}). Formally, branch-and-bound algorithms for Problem~\ref{prob:MIQP} solve parametric continuous quadratic programs of the following form
\begin{problem}\label{prob:QP}
 \begin{align*}
    \phi(y^{*},x_{k},\mathcal{B}(i^{b}_{k}))= ~&\underset{y}{\min}  \enskip y^{T}Qy+c^{T}y\\
    &\text{subject to} 
    \\
    &\qquad  y_{0}= x_{k},\\
    &\qquad  Cy \leq b,\\
    &\qquad Wy \leq w(\mathcal{B}(i^{b}_{k})),
    \end{align*}
where $\phi(y^{*},x_{k},\mathcal{B}(i^{b}_{k}))$ denotes the objective function at the optimal solution $y^{*}$, $y$ is defined in Problem~\ref{prob:MIQP}, $y_{1},...,y_{s}\in \{y_{j}\in \mathbb{R}^{s}~ \vert~ 0\leq y_{j} \leq 1 \}$ are relaxed binary variables that are part of the full vector $y$, $\mathcal{B}(i^{b}_{k})$ denotes a node in a branching tree at branch-and-bound iteration $i^{b}_{k}$, $x_{k}$ is defined in Eq.~(\ref{eq:lin_dyn}), $y_{0}$, $Q,$  $C,~b$ are defined in Problem~\ref{prob:MIQP}, and $W\in \mathbb{R}^{\ell\times n}$ $~w\in \mathbb{R}^{\ell}$ are constraints added during the branch-and-bound iterations with more details on constraint construction in Section~\ref{sec:algs}.
\end{problem}

Problem~\ref{prob:QP} is solved in Algorithm~\ref{alg:bnb} with FBstab \cite{liao2020fbstab} where no assumption on constraint qualification is made. FBstab is open source, combines the proximal point algorithm with a Newton-type method, and is quite general because it can handle degenerate quadratic programs and positive semi-definite objective function matrices \cite{liao2020fbstab}. This is advantageous for the proposed branch-and-bound solver to quickly find infeasible QPs and handle well-posed MIQPs as defined in \cite{bemporad1999control}.

\begin{assumption}
    The Hessian matrix $Q$ is symmetric and positive semidefinite.
\end{assumption}
The above assumption is sufficient to guarantee a solution exists, which is used within the FBstab computation of QP iteration updates.
\begin{remark}
    FBstab is advantageous for mixed integer quadratic MPC, because binary terms do not need to be explicitly penalized in the matrix $Q.$ Meaning FBstab can solve QP subproblems where the MIQPs are well posed as defined in \cite{bemporad1999control}.
\end{remark}

Algorithm~\ref{alg:bnb} below solves the parametric QP (Problem~\ref{prob:QP}) and then prunes the branch-and-bound tree following the logic in Algorithm~\ref{alg:prune}. Otherwise, the branch-and-bound tree splits on each binary variables defined in Problem~\ref{prob:MIQP} until there are no more binary variables or the branch-and-bound iteration limit is reached. The lines 8 and 11 of Algorithm~\ref{alg:bnb} update a stored vector in memory of the best $y.$ Finally, lines 9-12 of Algorithm~\ref{alg:bnb} handle the case where the branch-and-bound iteration limit is reached before solving the last nodes. 

To note for practitioners, splitting for binary variables in this work means creating two nodes $\mathcal{B}(i^{b}_{k}+1)$ and $\mathcal{B}(i^{b}_{k}+2)$ where $\mathcal{B}(i^{b}_{k}+1)$ has an additional constraint $0\leq y_{j}\leq 0$ and $\mathcal{B}(i^{b}_{k}+2)$ has an additional constraint $1\leq y_{j}\leq 1,$ where $j$ denotes the current binary variable being branched. This is in contrast to equality constraints because inequality constraints are able to be satisfied in fewer FBstab iterations than equality constraints. After a split, Problem~\ref{prob:QP} is solved for $\mathcal{B}(i^{b}_{k}+1)$ and $\mathcal{B}(i^{b}_{k}+2)$ following the logic in Algorithm~\ref{alg:bnb} until reaching max branch-and-bound iteration at sample time $k$ denoted $\bar{i}^{b}_{k}.$

Classically, Algorithm~\ref{alg:bnb} is termed a ``T-cut suboptimal branch-and-bound method", where the branch-and-bound ends after a finite node limit is reached \cite{ibaraki1983using}. We provide a similar form, but branches the last nodes unlike the classic form. It is worth noting Algorithm~\ref{alg:bnb} can be improved computationally in numerous ways, for example warm starting nodes \cite{marcucci2020warm} and early QP termination \cite{liang2020early}, however this is outside the scope of this work. A branch-and-bound tree formed by Algorithm~\ref{alg:bnb} is illustrated in Figure~\ref{fig:bnb-example} where $\phi(\mathcal{B}(\cdot))$ denotes solving Problem~\ref{prob:QP} at branch-and-bound iteration $\cdot$ for notational brevity.

\begin{figure}
\centering
\includegraphics[scale=0.75]{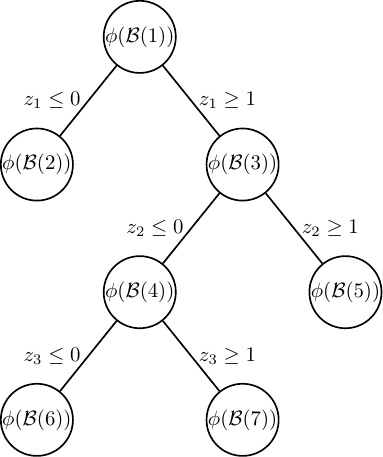}
\caption{Example of a branch-and-bound tree via Algorithm~\ref{alg:bnb}.}
\label{fig:bnb-example}
\end{figure}

\begin{algorithm}\caption{Iteration Limited Branch-and-Bound}\label{alg:bnb}
\KwIn{$i^{b}_{k}$ and $i^{qp}_{k}$ from Problem~\ref{prob:low_compute}, $Q,~C,~b$ from Problem~\ref{prob:MIQP}, $A,~B$, from Eq.~(\ref{eq:lin_dyn}), and $x_{0}$ from Eq.~(\ref{eq:lin_dyn}), $\tilde{y},~\tilde{\psi}$ are initialized to sufficiently large values}
\KwOut{$\tilde{y}$}
\For{$i^{b}_{k}=0,1,2,...,\bar{i}^{b}_{k}$}{
Solve Problem~\ref{prob:QP} for $\mathcal{B}(i^{b}_{k})$ with $i^{qp}_{k}.$

Prune $\mathcal{B}$ via Algorithm~\ref{alg:prune}

\If{$\mathcal{B}(i^{b}_{k})\neq \emptyset$}{Branch on next binary variable}
}
\If{$\mathcal{B}(\bar{i}^{b}_{k}-1)\neq \emptyset$}
{Solve Problem~\ref{prob:QP} for $\mathcal{B}(\bar{i}^{b}_{k}-1)$ with $i^{qp}_{k}.$  \\ Update $\tilde{y}.$}
\If{$\mathcal{B}(\bar{i}^{b}_{k})\neq \emptyset$}
{Solve Problem~\ref{prob:QP} for $\mathcal{B}(\bar{i}^{b}_{k})$ with $i^{qp}_{k}.$ \\ Update $\tilde{y}.$}
\end{algorithm}

Algorithm~\ref{alg:prune} prunes the branch-and-bound tree and updates the best stored solution,~$\tilde{y}$. The depth-first and best-first pruning logic are pruning heuristics that affect the branching prioritization of either the first feasible solution with depth-first or the best solution measured via the objective function with best-first. It is outside the scope of this work to fully analyze these two methods, but in simulation Example~\ref{prob:switching_thrusters} uses the best-first and Example~\ref{prob:min_thrust_MIQP} uses depth-first pruning logic.

\begin{algorithm}\caption{Prune Logic}\label{alg:prune}
\KwIn{$i^{b}_{k}$ from Problem~\ref{prob:low_compute}, $\mathcal{B},~\tilde{y},~\tilde{\psi}$ from Algorithm~\ref{alg:bnb}}
\KwOut{$\tilde{y},~\mathcal{B}$}

\If{Problem~\ref{prob:QP} is infeasible}{$\mathcal{B}(i^{b}_{k})=\emptyset$}
\Else{Update $\tilde{y}$ \\ \If{Depth-First Pruning}{$\mathcal{B}(i^{b}_{k}+1)=\emptyset$}

\If{Best-First Pruning}{
\If{$\psi(\tilde{y}_{1,...,s},x_{k})>\tilde{\psi}$}{$\mathcal{B}(i^{b}_{k})=\emptyset$}
\Else{$\tilde{\psi}=\psi(\tilde{y}_{1,...,s},x_{k})$}
}}

\end{algorithm}

In practice, the control loop can be implemented via a sample-and-hold formulation as demonstrated in Algorithm~\ref{alg:uniting_control} where $k \in K$ denote a sampling time $k$ in a set of sample times $K\in \mathbb{Z}_{+}$, the terms $i_{k}^{b},~i_{k}^{qp}$ are defined in Problem~\ref{prob:low_compute}, and the terms $h_{0}(\cdot),~h_{1}(\cdot)$ are defined in Section~\ref{sec:setvalue_iter_limit}.

\begin{algorithm}\caption{Sampled Uniting Control}\label{alg:uniting_control}
\KwIn{$i^{qp}_{k}=i^{qp}_{0},~i^{b}_{k}=i^{b}_{0}$, $q=1,$ $x=x_{0}$}
\KwOut{$u=h_{q}(x_{k})$}
\For{$k\in K$}{

\If {Branch-and-Bound Iteration Limited Case}{

\If {$x_{k}\in \mathcal{U}_{0}$ and $q=0$}{$u=\kappa(h_{0}(x_{k},i^{b}_{k}))$\\
}

\If {$x_{k}\in \mathcal{T}_{1}$ and $q=1$}{$u=\kappa(h_{1}(x_{k},i^{b}_{k}))$}

\If{$x_{k} \in \mathcal{T}_{1}$ and $q=0$}{$q=1,~i^{b}_{k}=\bar{i}^{b} $}
\If{$x_{k} \in \mathcal{T}_{0}$ and $q=1$}{$q=0,~i^{b}_{k}=\underline{i}^{b}$}
}

\If {Quadratic Programming Iteration Limited Case}{

\If {$x_{k}\in \mathcal{U}_{0}$ and $q=0$}{$u=\kappa(h_{0}(x_{k},i^{qp}_{k}))$\\
}

\If {$x_{k}\in \mathcal{T}_{1}$ and $q=1$}{$u=\kappa(h_{1}(x_{k},i^{qp}_{k}))$}

\If{$x_{k} \in \mathcal{T}_{1}$ and $q=0$}{$q=1,~i^{qp}_{k}=\bar{i}^{qp} $}
\If{$x_{k} \in \mathcal{T}_{0}$ and $q=1$}{$q=0,~i^{qp}_{k}=\underline{i}^{qp}$}
}

}
\end{algorithm}

\section{Simulation Results}\label{sec:results}
In this section, the hybrid model from Section~\ref{sec:hybrid_theory} and algorithms developed in Section~\ref{sec:algs} are evaluated in simulation for the switching thruster and minimum thrust control spacecraft rendezvous problems \cite{finaScitech2026,fina2025hybrid}. Comparisons are made between fixed iteration limits for Algorithm~\ref{alg:uniting_control} with various iterations limits and FBstab as the QP solver. Finally, practical guidelines are outlined for finding values of $c_{p,0}$ and $c_{p,1},$ which are the switching values introduced in Eqs~(\ref{eq:T0_lyap},\ref{eq:U0_lyap},\ref{eq:T1_lyap}). 

The initial position for both spacecraft rendezvous and proximity problems is $x_{0}=[6800,~0,~0,~0,~-15.368  ,~0]^{T}$, the optimization is modeled in YALMIP \cite{lofberg2004yalmip}, the optimal solution is calculated via Gurobi with a two second time limit \cite{gurobi_MIP}, the big-M term for Example~\ref{prob:switching_thrusters} is $0.1$, the MPC horizon is $N=15$, and the sample time is $5$ minutes for slow spacecraft dynamics with more details in Appendix~\ref{sec:appendix}. The switching law terms are $c_{obj,0}=100,~c_{obj,1}=1000,~c_{feas,0}=200,~c_{feas,1}=300$, the Lyapunov penalty terms are $\theta =1,~\sigma=1^{-5}$ for $V_{obj}$, the Lyapunov penalty terms are $\theta =1^{-3},~\sigma=1^{-5}$ for $V_{feas}$, and $\bar{i}^{b}=20,~\bar{i}^{qp}=100$ when switching branch-and-bound iteration limits and quadratic programming iteration limits, respectively. Both examples denote dynamic and control optimization variables as $\zeta,~v$, respectively, and optimization binary variables as $z$. The optimization variables can be stacked to take the form of Problem~\ref{prob:MIQP} as $y:=[\zeta^{T},~v^{T},z^{T}]^{T}.$

The switching thrusters problem is as follows
\begin{example}[Spacecraft Rendezvous and Proximity with Switching Thrusters]\label{prob:switching_thrusters}
 \begin{align*}
   \underset{\zeta,v_{1},v_{2},z}{\text{minimize}} & \enskip \alpha_{v_{1}}\vert \vert v_{1}\vert \vert_{2}+\alpha_{v_{2}}\vert \vert v_{2}\vert \vert_{2}+\alpha_{1}\vert \vert \zeta \vert \vert _{2}\\
    \text{subject to} & \enskip 
    \zeta_{0} = x_{k},\\&\zeta_{k+1}=A\zeta_{k}+B_{1}v_{k,1}+B_{2}v_{k,2}, \\
     & -M z_{k}\leq  v_{k,1}  \leq M z_{k},\\
     & -M (1-z_{k}) \leq v_{k,2}  \leq M (1-z_{k}),\\
     &  \zeta_{N}  = \textbf{0}, \\
     &\zeta\in Z ,v_{1}\in V,v_{2}\in V, 
    \end{align*}
    where $\zeta\in\mathbb{R}^{pN}$ is the state vector defined as $\zeta:=[\zeta_{1}^{T},...,\zeta_{N}^{T}]^{T}$ over the prediction horizon, $N$ is the MPC prediction horizon, $v_{1}\in\mathbb{R}^{rN},v_{2}\in\mathbb{R}^{rN}$ are the control parts for mode 1 and 2 for the MPC horizon, $z\in \mathbb{B}^{rN}$ is vector of binary variables for the MPC horizon, $\alpha_{v_{1}},\alpha_{v_{2}},\alpha_{1}\in \mathbb{R}_{+}$ are user-desired scalar penalty terms,$\zeta_{0}$ is the initial state for the optimization variables take from $x_{k}$ defined in Eq (\ref{eq:lin_dyn}), $x_{k}$ is defined in Eq (\ref{eq:lin_dyn}) sampling time $k$, ~$\zeta_{k}\in \mathbb{R}^{p}$, $A\in \mathbb{R}^{p\times p}$ is a matrix for linear dynamics,  $B_{1}\in \mathbb{R}^{p\times r}, B_{2}\in \mathbb{R}^{p\times r}$ which are all defined further in Appendix~\ref{sec:appendix},  $M$ is a big-M constant, $v_{k,1}\in \mathbb{R}^{r}$ is the control for subsystem 1 and $v_{k,2}\in \mathbb{R}^{r}$ is the control for subsystem 2 at sampling time $k$,   $z_{k}\in \mathbb{B}^{r}$, $\zeta_{N}$ is the terminal state on an MPC horizon $N$, and $Z,V_{1},V_{2}$ are compact set constraints for $\zeta$ and $v_{1},v_{2}$, respectively.
\end{example}

 The minimum thrust problem is defined next where the $\ell^{1}$ norm control constraint is nonconvex and can be reformulated into a mixed-integer form seen in \cite{fina2025hybrid}, the optimization state penalty matrix is $P=10^{-7}I_{p}$, and optimization control penalty matrix is $R=10^{2}I_{p}$.
\begin{example}[Spacecraft Rendezvous and Proximity with Minimum Thrust Control]\label{prob:min_thrust_MIQP}
    \begin{align*}
    &\underset{\zeta,v,z}{\min}  \enskip \zeta^{T}P\zeta+v^{T}Rv\\
    &\text{subject to} 
    \enskip \zeta_{0} = x_{k},\\  
     &\qquad \zeta_{k+1} = A\zeta_{k}+Bv_{k}, \\
     &\qquad  z_{k}v_{\min}\leq \vert \vert v_{k}\vert \vert_{1} \leq v_{\max}z_{k}, \\
     &\qquad \zeta_{N}\in Z_{\text{terminal}},
    \end{align*}
where $\zeta \in \mathbb{R}^{pN}$ is defined over the prediction horizon, $N$ is the MPC prediction horizon, $v\in \mathbb{R}^{rN}$,$z\in \mathbb{B}^{rN}$, $P\in \mathbb{R}^{pN\times pN}$ is a time-invariant cost on state, $R\in \mathbb{R}^{rN\times rN}$ is a time-invariant cost on state, $\zeta_{0}$ is the initial state for the optimization variables take from $x_{k}$ defined in Eq (\ref{eq:lin_dyn}), $\zeta_{k}\in \mathbb{R}^{p}$, $A\in \mathbb{Q}^{p\times p}$ are the rendezvous dynamics defined in detail in Appendix~\ref{sec:appendix}, $B\in \mathbb{R}^{p\times r}$ is an electric control thrust defined in detail in Appendix~\ref{sec:appendix}, $v_{\min}\in \mathbb{R},~v_{\max}\in \mathbb{R}$ are lower and upper bounds on control $v$, $z_{k} \in \mathbb{B}$, $\zeta_{N}$ is the terminal state on an MPC horizon $N$, and $Z_{\text{terminal}}$ is a compact terminal set. 
\end{example}

\subsection{Algorithm~\ref{alg:uniting_control} Compared to a Fixed Iteration Limit}\label{sec:comparionOptimal}
In this section, the tracking error between the optimal trajectory and suboptimal trajectories via Algorithm~\ref{alg:uniting_control} for fixed iteration limits, $V_{obj},$ and $~V_{feas}$ are measured with the $\ell^{2}$ norm.

\subsubsection{Results for Switching Control}

In Figure~\ref{fig:fixed_bnb_ST}, fixed branch-and-bound iteration limits of $i^{b}_{0}=2,5,10,20$ converge to a mild tracking error after 30 sample times. In Figure~\ref{fig:fixed_qp_ST}, fixed quadratic programming iteration limits of $i^{qp}_{0}=1,4,5,7,10$ have larger tracking errors than fixed branch-and-bound iteration limits and $i^{qp}_{0}=50$ empirically results in mild tracking error after 30 sample times.

In Figure~\ref{fig:ST_vary_bnb_V1} and Figure~\ref{fig:ST_vary_bnb_V2}, the tracking error is comparable to Figure~\ref{fig:fixed_bnb_ST} for $\underline{i}^{b}=2,5,20$. Unexpectedly, the tracking error for $\underline{i}^{b}=10$ is an order of magnitude less than the rest and is empirically explored further with position plots in Section~\ref{sec:closer_look}.

In Figure~\ref{fig:ST_vary_qp_V1} and Figure~\ref{fig:ST_vary_qp_V2}, there exist $\underline{i}^{qp}$ that track the optimal trajectory better than fixed iterations of $i^{qp}_{0}=1,4,5,7,10$ in Figure~\ref{fig:fixed_qp_ST}. When comparing the branch-and-bound to the quadratic programming iteration limited cases, it is observed that quadratic programming iteration limits affect tracking error significantly more than the branch-and-bound iteration limit. Conceptually this is from the problem formulation of Example~\ref{prob:switching_thrusters}, because branch-and-bound iteration limit only affects the switching thruster, i.e., both thrusters may be on at the same time for some of the MPC horizon, while a quadratic programming iteration limit leads to possible constraint violation of all the constraints. 

Finally, tracking error for $V_{obj}$ and $V_{feas}$ are comparable for branch-and-bound cases in Figure~\ref{fig:ST_vary_bnb_V1} and Figure~\ref{fig:ST_vary_bnb_V2} and empirically have mildly different tracking errors for quadratic programming iteration limits in Figure~\ref{fig:ST_vary_qp_V1} and Figure~\ref{fig:ST_vary_qp_V2}.

\begin{figure}[!ht]
\centering
\begin{subfigure}{0.4\textwidth}
    \includegraphics[width=\textwidth]{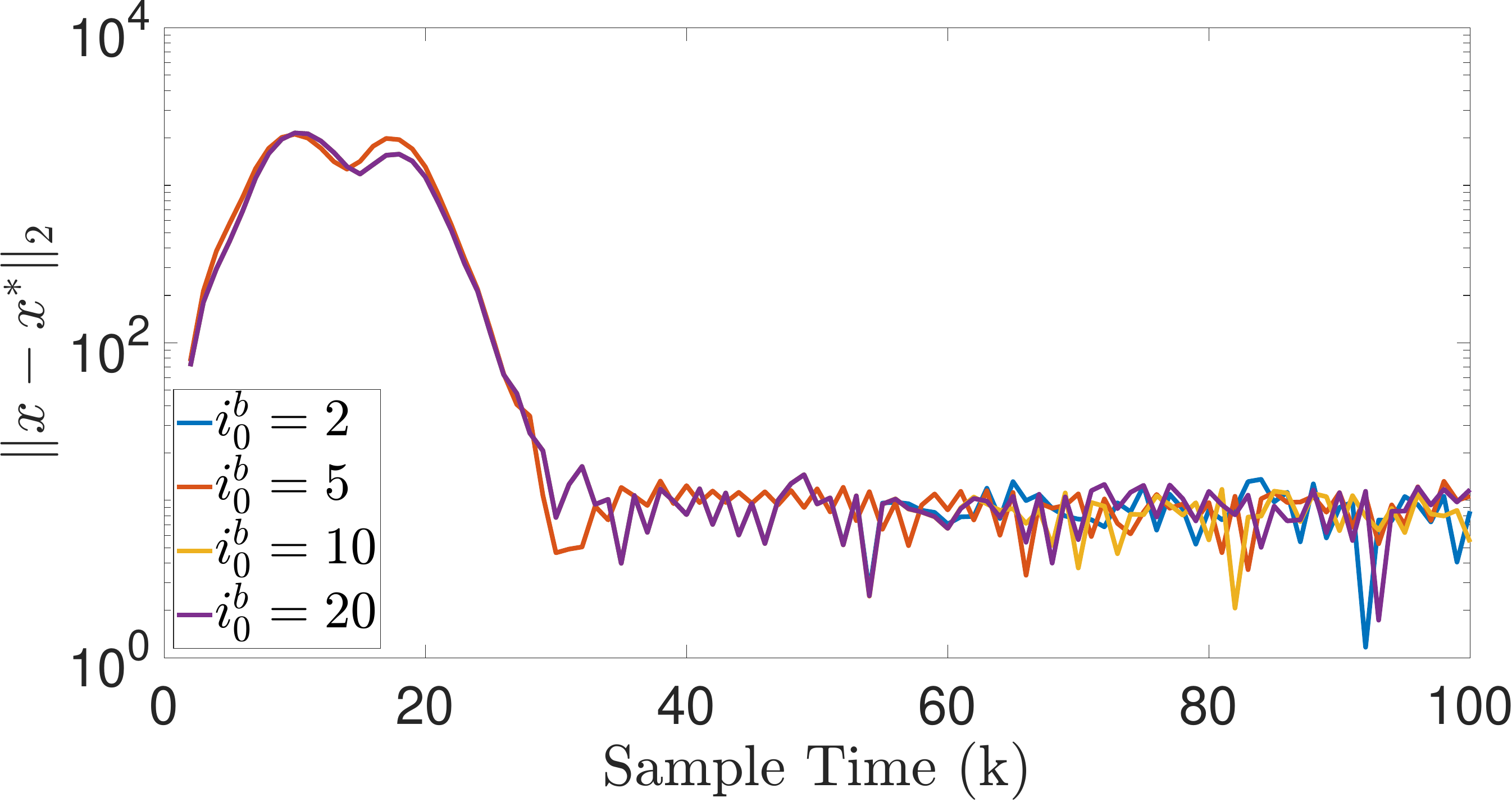}
    \caption{Branch-and-Bound Iteration Limits}
    \label{fig:fixed_bnb_ST}
\end{subfigure}
\hspace{10pt}
\begin{subfigure}{0.4\textwidth}
    \includegraphics[width=\textwidth]{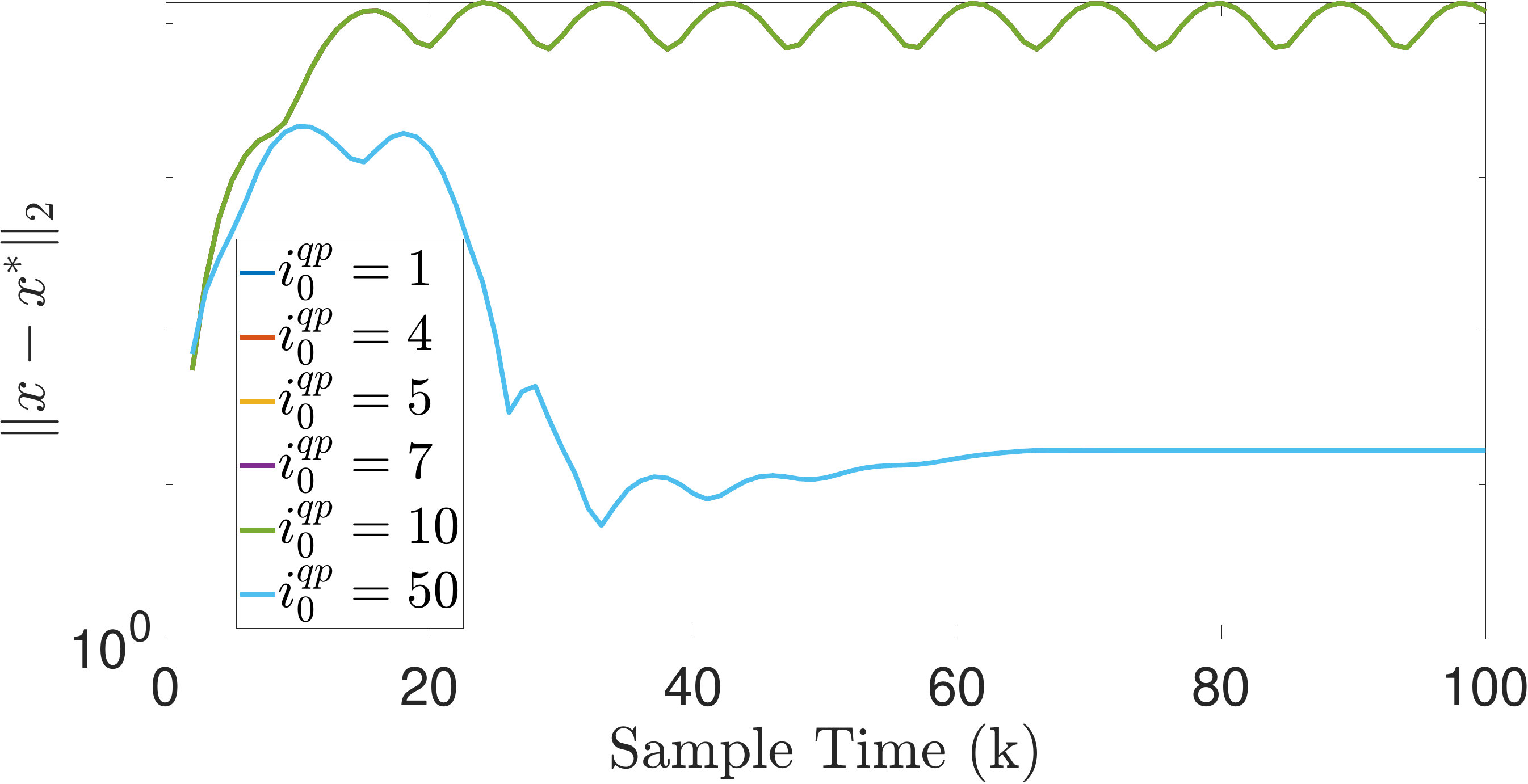}
    \caption{QP Iteration Limits}
     \label{fig:fixed_qp_ST}
\end{subfigure}
\caption{Fixed Iteration Limits for Switching Thruster (Example~\ref{prob:switching_thrusters}).}
% \label{fig:figures}
\end{figure}

\begin{figure}[!ht]
\centering
\begin{subfigure}{0.4\textwidth}
    \includegraphics[width=\textwidth]{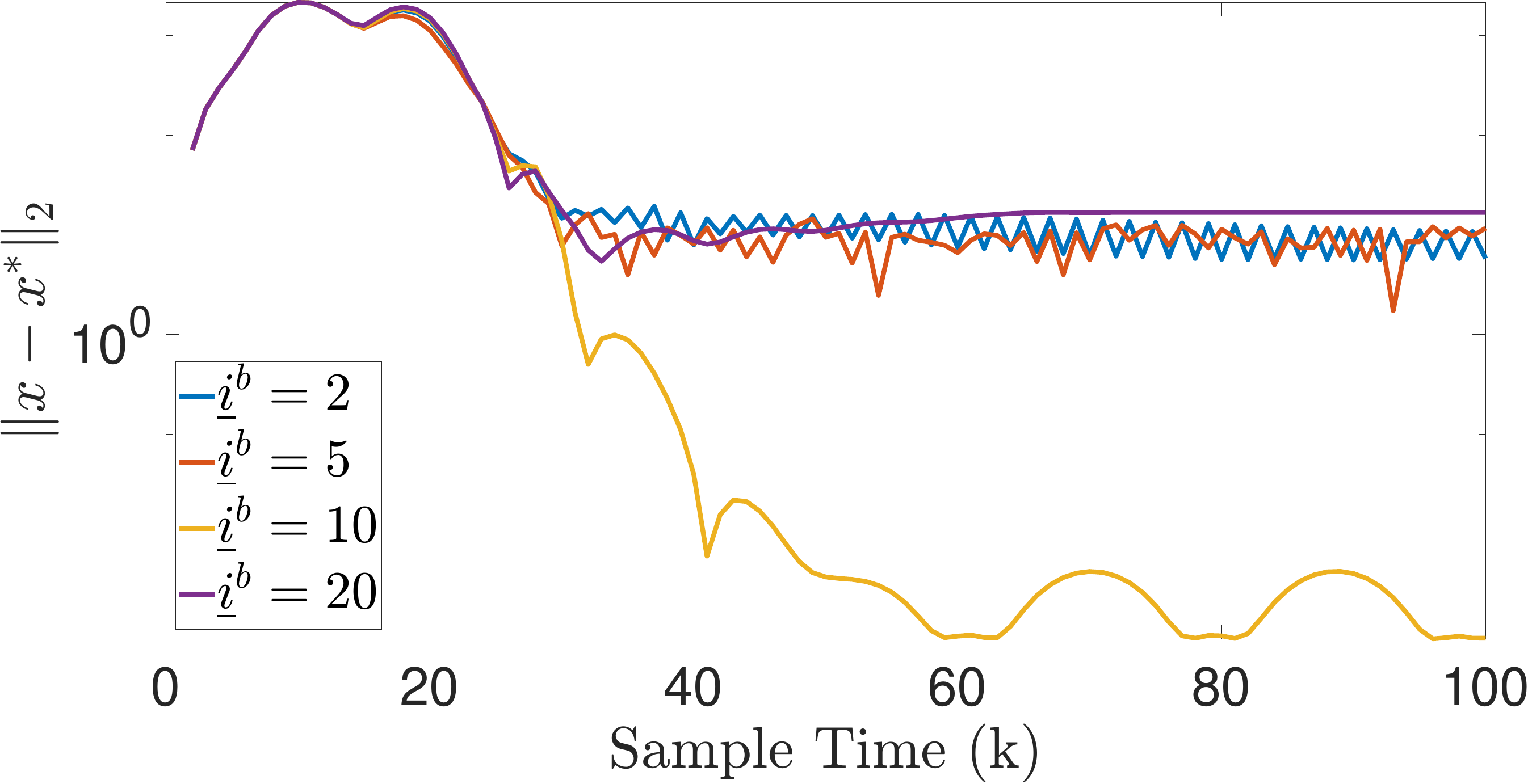}
    \caption{$V_{obj}$ }
    \label{fig:ST_vary_bnb_V1}
\end{subfigure}
\hspace{10pt}
\begin{subfigure}{0.4\textwidth}
    \includegraphics[width=\textwidth]{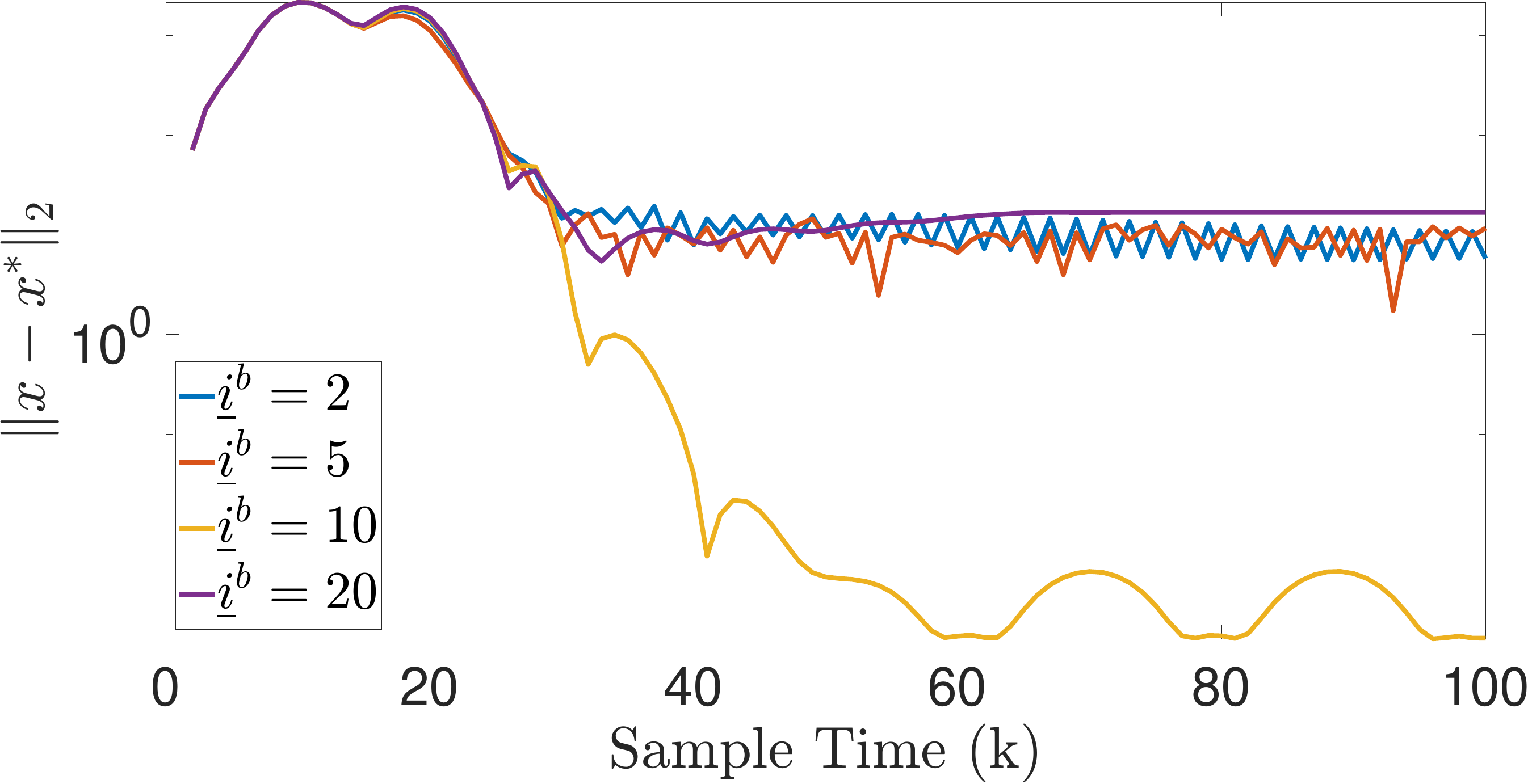}
    \caption{$V_{feas}$ }
    \label{fig:ST_vary_bnb_V2}
\end{subfigure}
\caption{Varying Branch-and-Bound Iteration Limits for Switching Thruster (Example~\ref{prob:switching_thrusters}).}
% \label{fig:figures}
\end{figure}

\begin{figure}[!ht]
\centering
\begin{subfigure}{0.4\textwidth}
    \includegraphics[width=\textwidth]{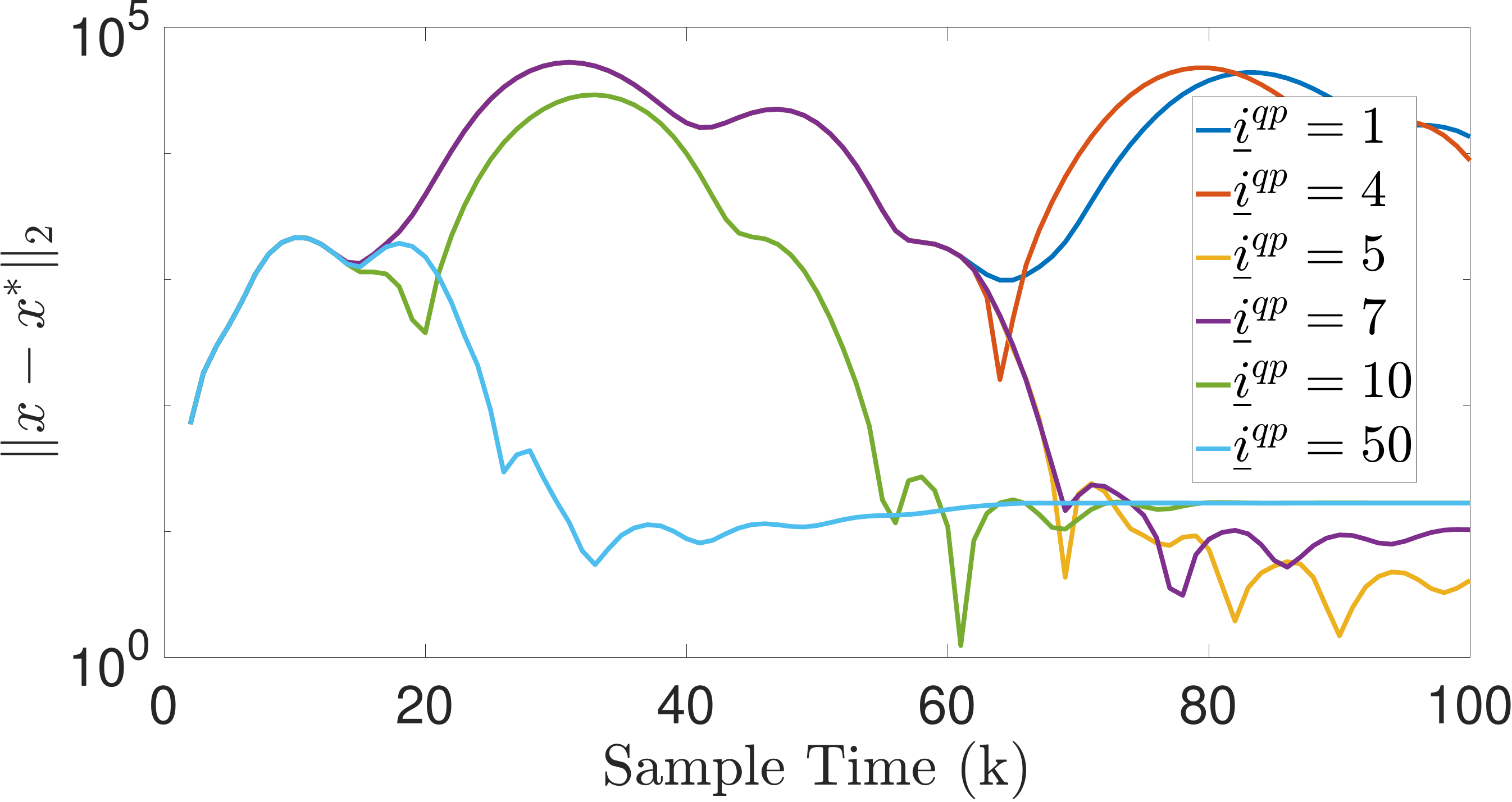}
    \caption{$V_{obj}$ }
    \label{fig:ST_vary_qp_V1}
\end{subfigure}
\hspace{10pt}
\begin{subfigure}{0.4\textwidth}
    \includegraphics[width=\textwidth]{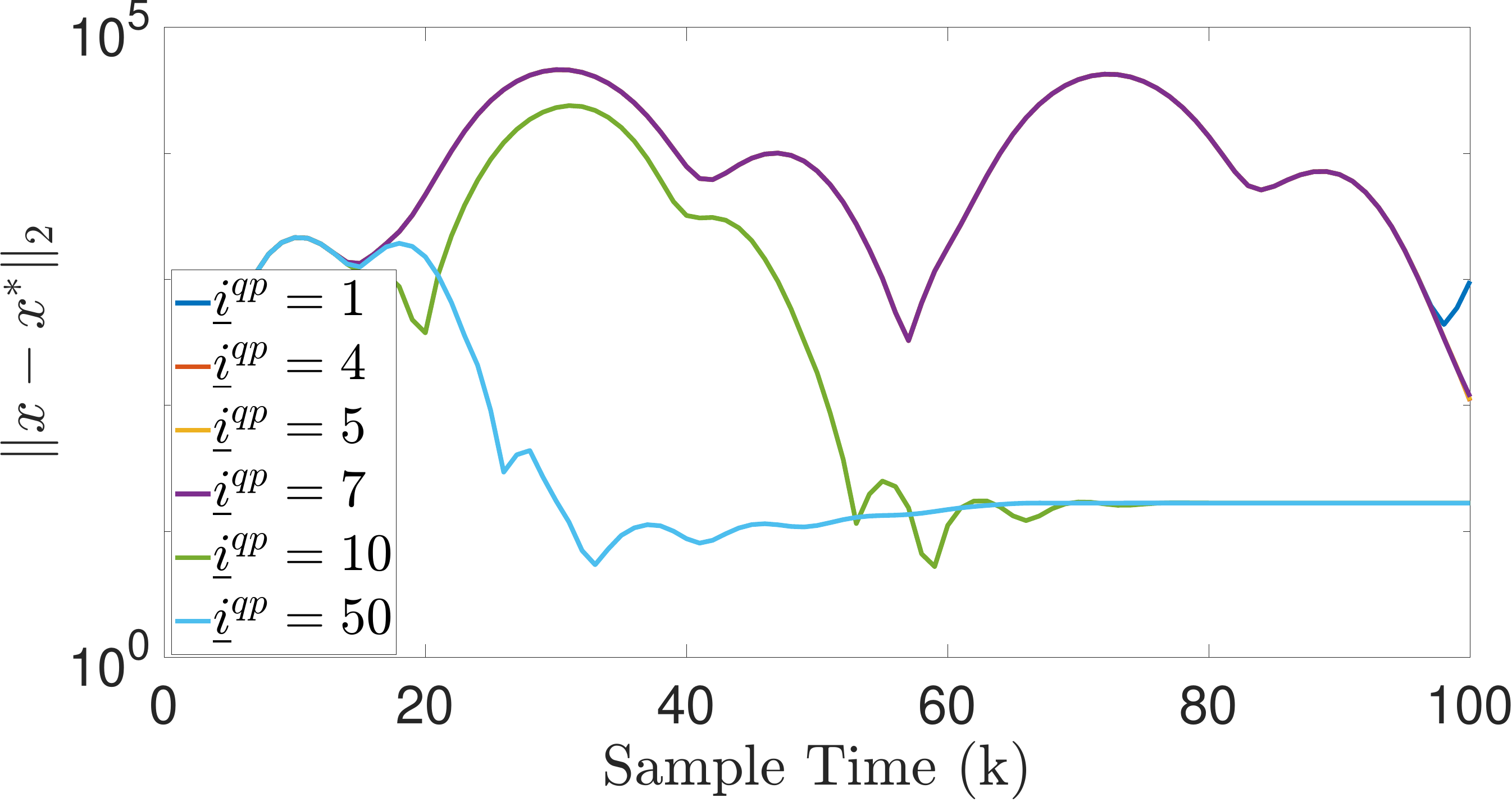}
    \caption{$V_{feas}$ }
    \label{fig:ST_vary_qp_V2}
\end{subfigure}
\caption{Varying QP Iteration Limits for Switching Thruster (Example~\ref{prob:switching_thrusters}).}
% \label{fig:figures}
\end{figure}

\subsubsection{Results for Minimum Thrust}

In Figure~\ref{fig:fixed_bnb_DONUT}, fixed branch-and-bound iteration limit of $i^{b}_{0}=10$ and $i^{b}_{0}=20$ have mild tracking error after 30 sample times. In Figure~\ref{fig:fixed_qp_DONUT}, a fixed QP iteration limit of $i^{qp}_{0}=50$ results in mild tracking after 30 sample times, while the rest have large tracking errors. At a high level, this makes sense because without sufficient branch-and-bound iteration limit the minimum thrust switching model is less accurate, i.e., the minimum thrust constraints may not be strictly satisfied for the whole MPC horizon, but without sufficient quadratic programming iteration limits there is constraint violation of all constraints.

In Figure~\ref{fig:DONUT_vary_bnb_V1} and Figure~\ref{fig:DONUT_vary_bnb_V2}, tracking error after 30 sample times is comparable to Figure~\ref{fig:fixed_bnb_DONUT} for branch-and-bound iteration limit of $i^{b}_{0}=10$ and $i^{b}_{0}=20$. As expected, the tracking error for a branch-and-bound iteration limit of $\underline{i}^{b}=2$ and $\underline{i}^{b}=5$ track the optimal solution better than the fixed iteration limit case. This is due to a sufficiently large iteration limit for the complex part of the trajectory and is empirically explained further in Section~\ref{sec:closer_look}.

As expected in Figure~\ref{fig:DONUT_vary_qp_V1} and Figure~\ref{fig:DONUT_vary_qp_V2}, $\underline{i}^{qp}$ iteration limits track the optimal trajectory better than fixed iteration limits of 1 through 10 in Figure~\ref{fig:fixed_qp_ST}. Finally, $V_{obj}$ and $V_{feas}$ empirically have comparable tracking error for branch-and-bound cases and have mildly different tracking error for the quadratic programming case.

Example~\ref{prob:min_thrust_MIQP} has an order of magnitude less tracking error on average for the uniting control law QP iteration limit compared to Example~\ref{prob:switching_thrusters}. This is likely due to the optimization problem specific formulation differences, i.e., Example~\ref{prob:switching_thrusters} has more optimization variables and a more complex controlled dynamical system.

\begin{figure}[!ht]
\centering
\begin{subfigure}{0.4\textwidth}
    \includegraphics[width=\textwidth]{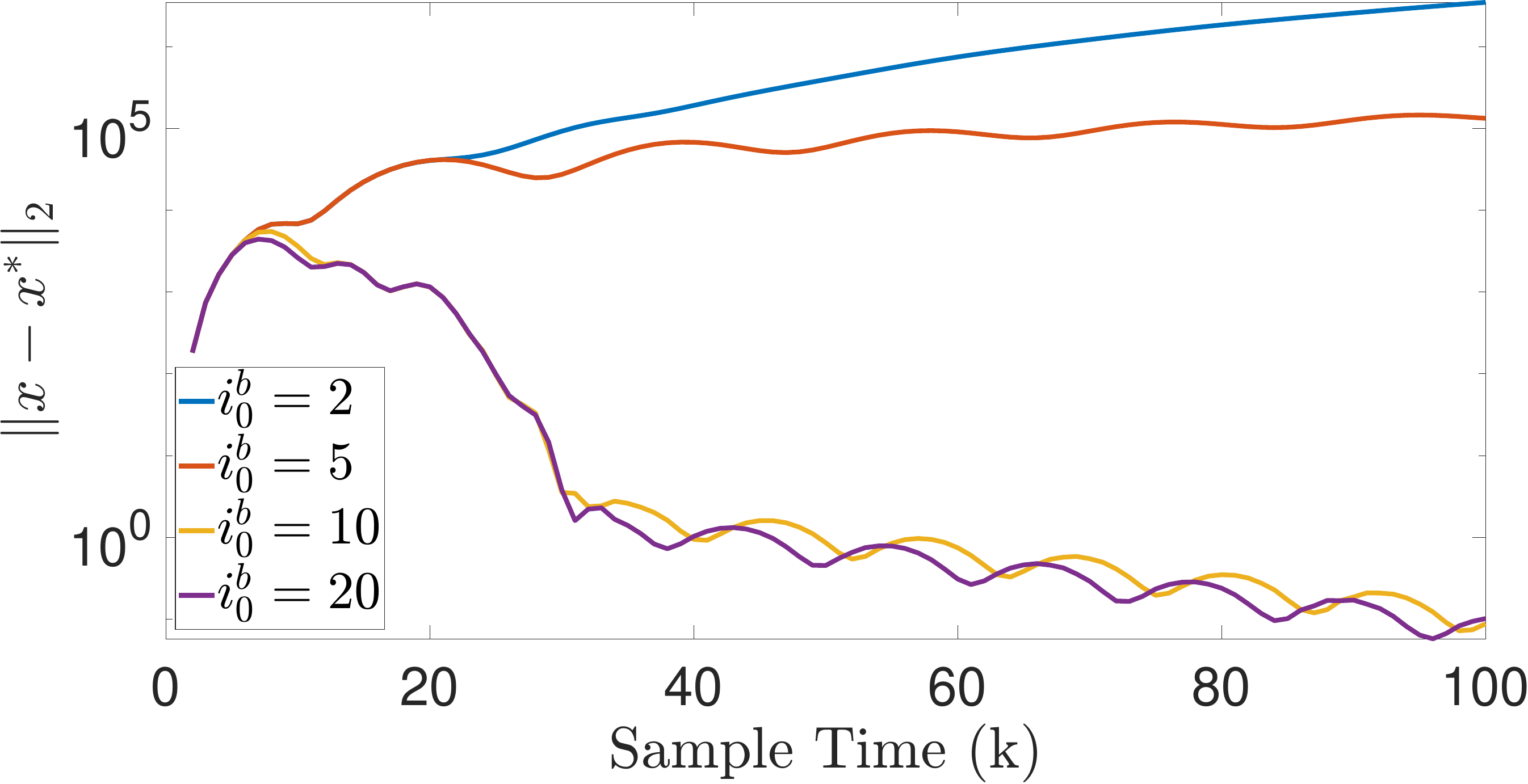}
    \caption{Branch-and-Bound Iteration Limits}
    \label{fig:fixed_bnb_DONUT}
\end{subfigure}
\hspace{10pt}
\begin{subfigure}{0.4\textwidth}
    \includegraphics[width=\textwidth]{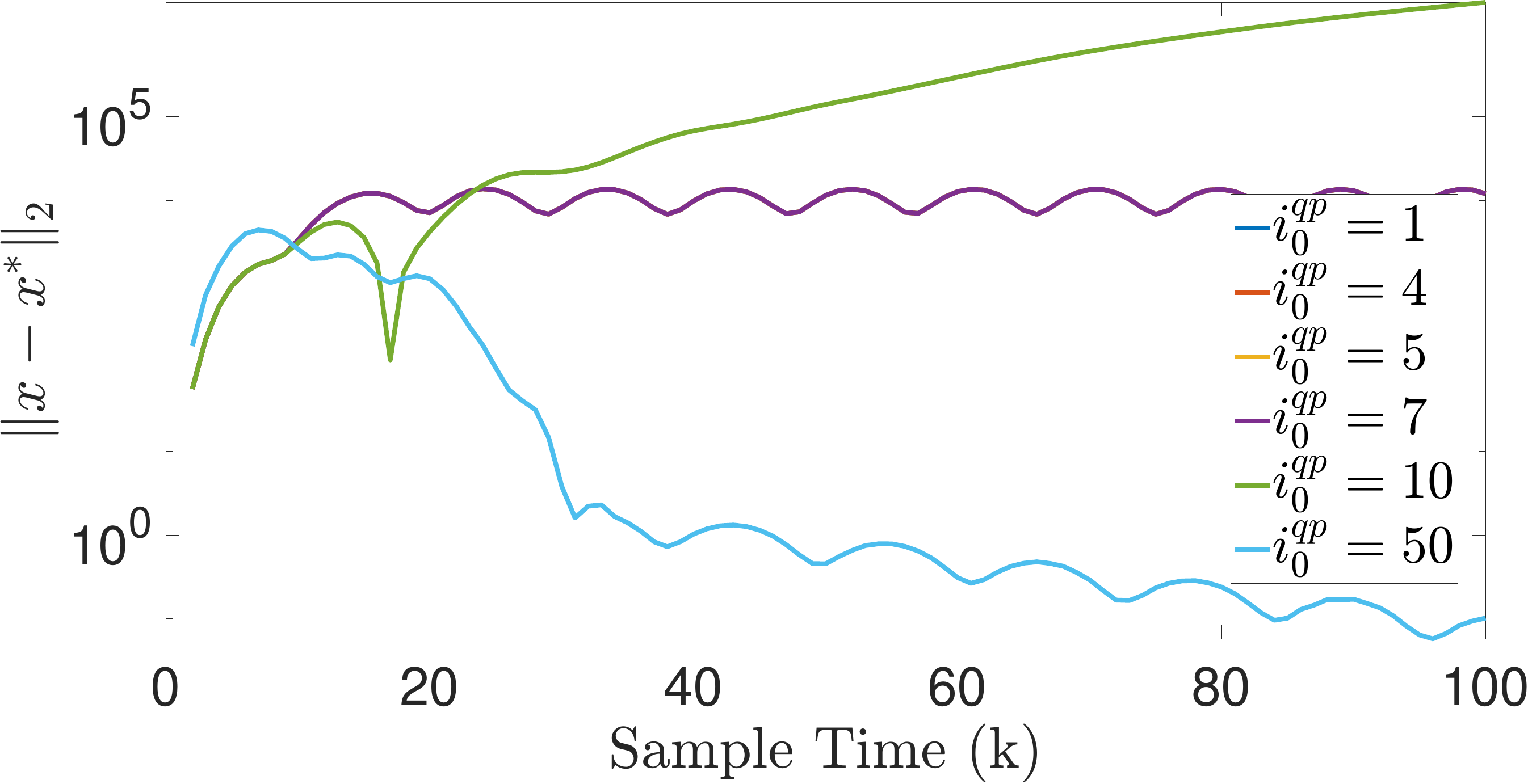}
    \caption{QP Iteration Limits}
    \label{fig:fixed_qp_DONUT}
\end{subfigure}
\caption{Fixed Iteration Limits for Minimum Thrust (Example~\ref{prob:min_thrust_MIQP}).}
% \label{fig:figures}
\end{figure}

\begin{figure}[!ht]
\centering
\begin{subfigure}{0.4\textwidth}
    \includegraphics[width=\textwidth]{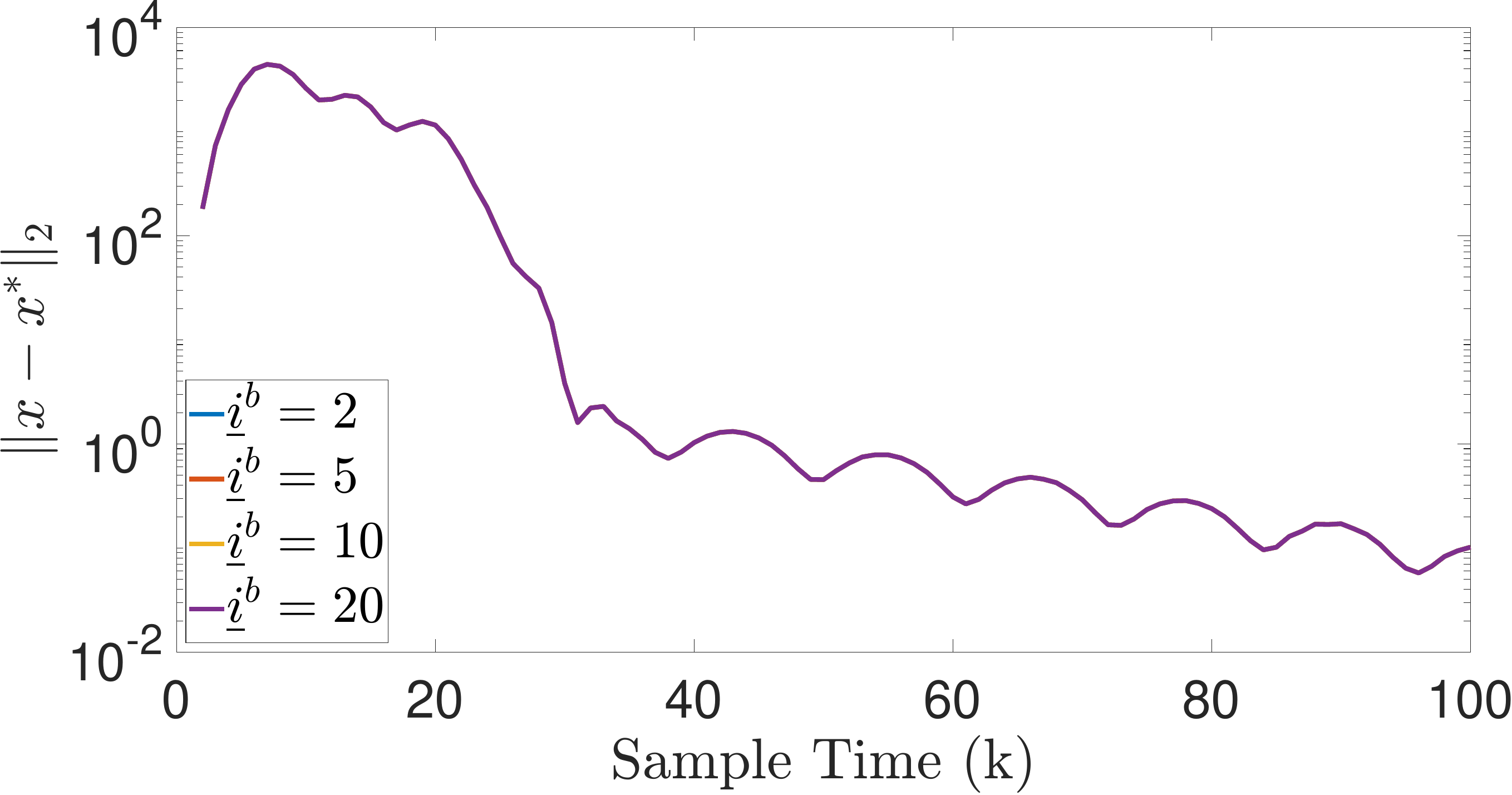}
    \caption{$V_{obj}$}
    \label{fig:DONUT_vary_bnb_V1}
\end{subfigure}
\hspace{10pt}
\begin{subfigure}{0.4\textwidth}
    \includegraphics[width=\textwidth]{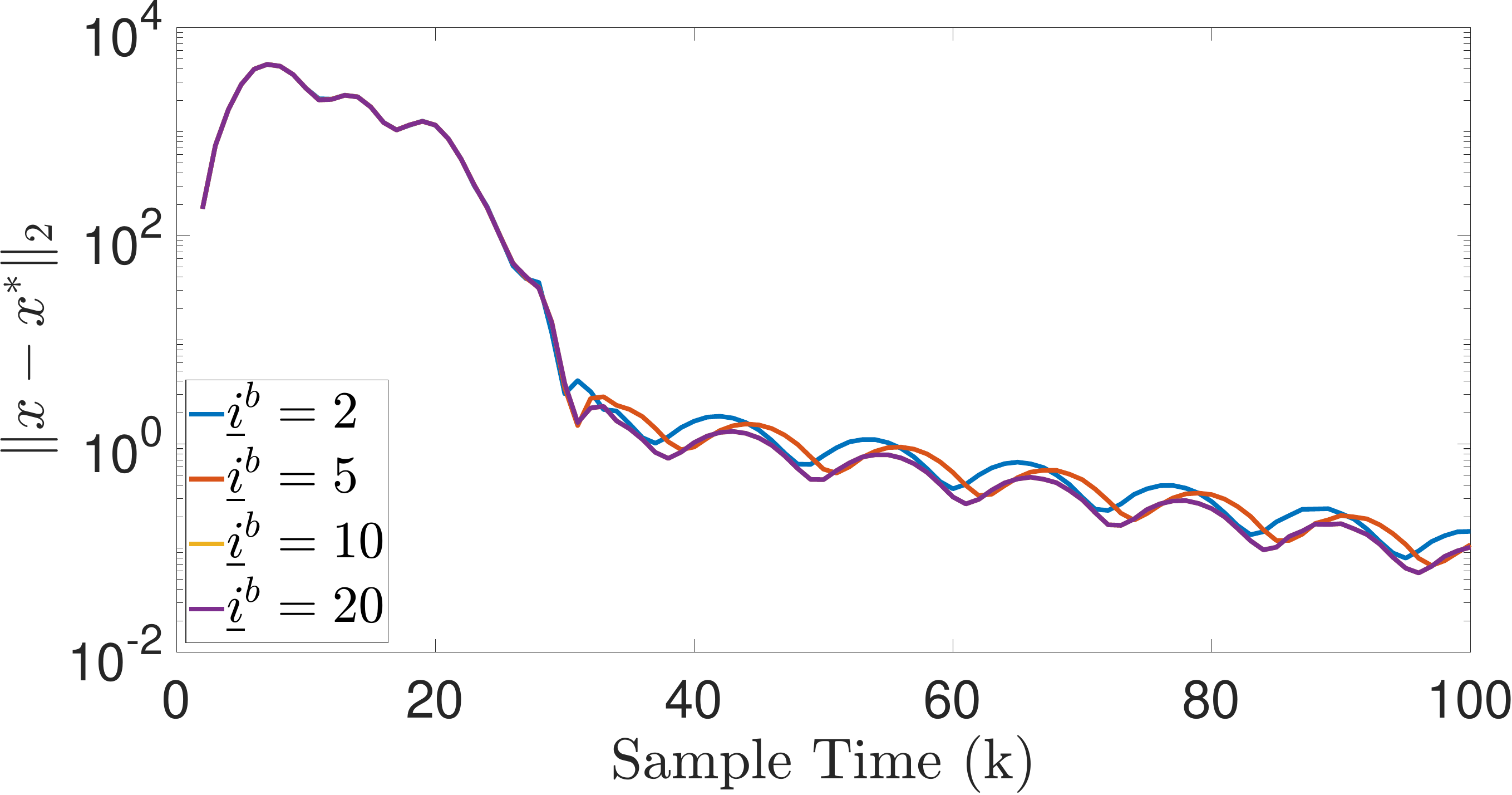}
    \caption{$V_{feas}$}
    \label{fig:DONUT_vary_bnb_V2}
\end{subfigure}
\caption{Varying Branch-and-Bound Iteration Limits for Minimum Thrust (Example~\ref{prob:min_thrust_MIQP}).}
% \label{fig:figures}
\end{figure}

\begin{figure}[!ht]
\centering
\begin{subfigure}{0.4\textwidth}
    \includegraphics[width=\textwidth]{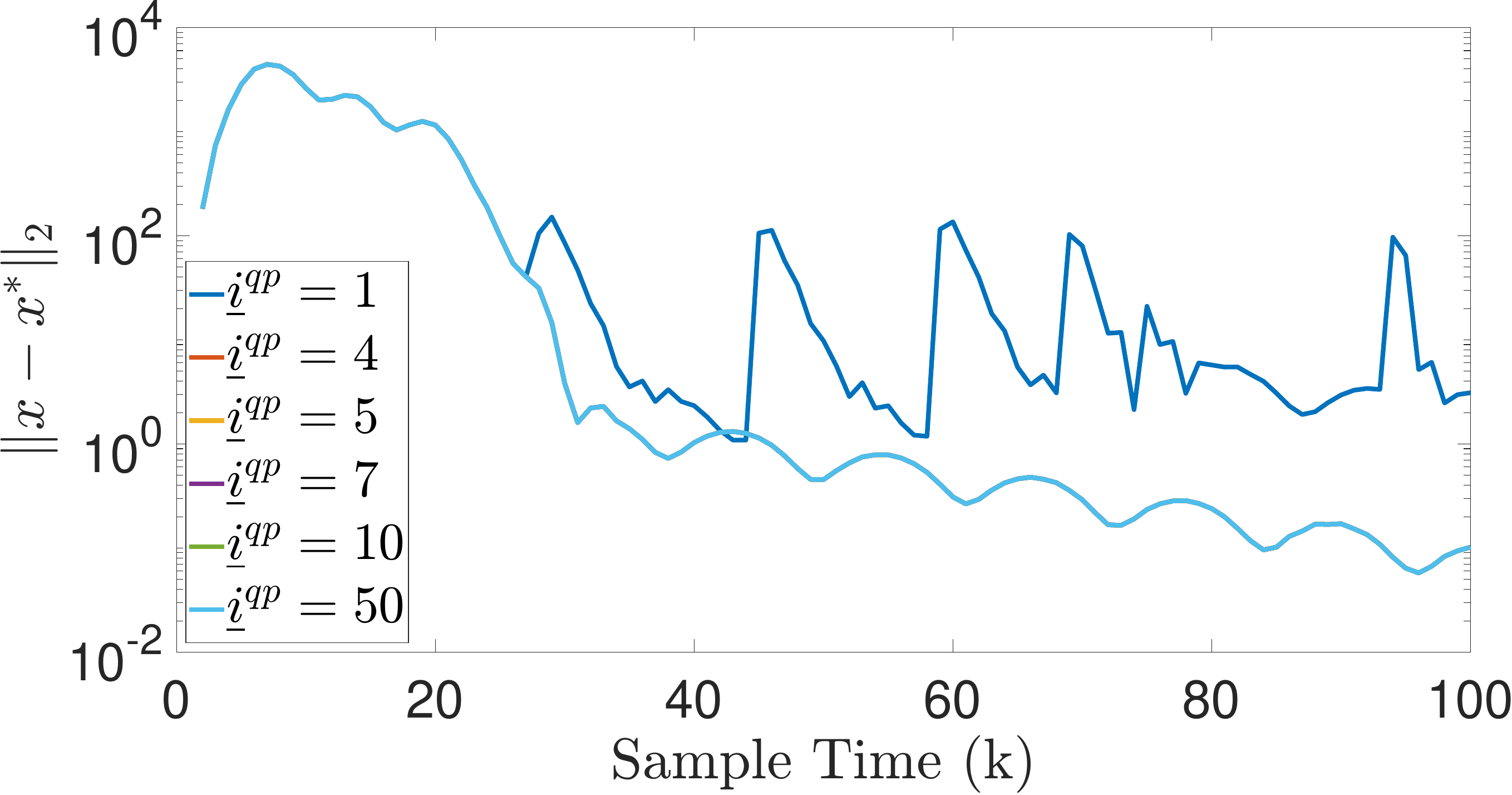}
    \caption{$V_{obj}$}
    \label{fig:DONUT_vary_qp_V1}
\end{subfigure}
\hspace{10pt}
\begin{subfigure}{0.4\textwidth}
    \includegraphics[width=\textwidth]{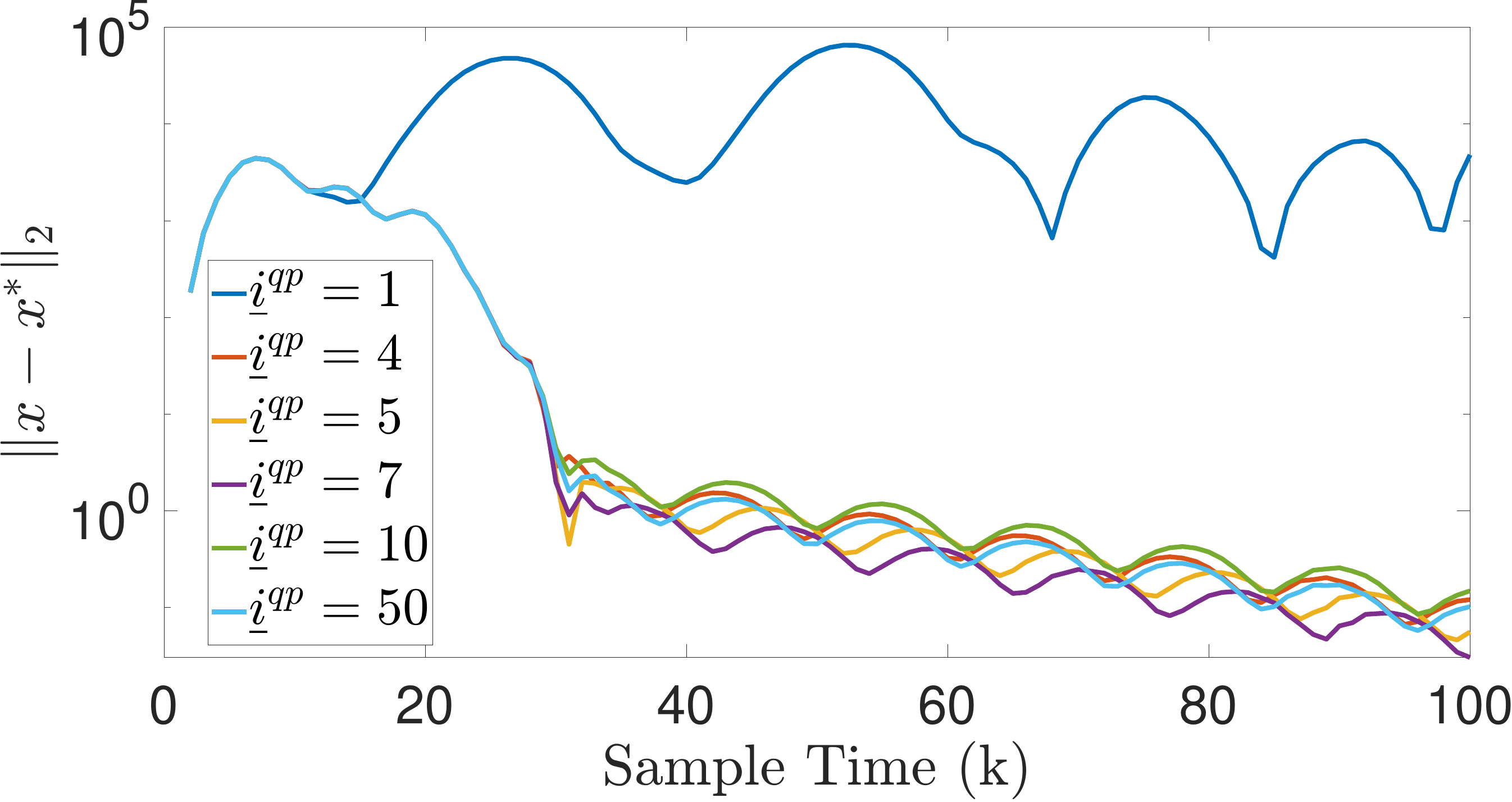}
    \caption{$V_{feas}$}
    \label{fig:DONUT_vary_qp_V2}
\end{subfigure}
\caption{Varying QP Iteration Limits for Minimum Thrust (Example~\ref{prob:min_thrust_MIQP}).}
% \label{fig:figures}
\end{figure}

\subsection{A Closer Look at Algorithm~\ref{alg:uniting_control} Near the Terminal Set}\label{sec:closer_look}
Comparing the tracking error between the optimal and suboptimal trajectories is insufficient to fully understand the behavior. In this section, the trajectories are empirically explored to understand what changes before and after the 30 sample time mark to sharply decrease the tracking error. Since $V_{obj}$ and $V_{feas}$ are comparable in performance, this section explores the trajectories for only $V_{feas}.$

Figure~\ref{fig:ST_pos_bnb} provides context for the difference between varying branch-and-bound iteration limits observed after 30 sample times in Figure~\ref{fig:ST_vary_bnb_V2}, i.e., $\underline{i}^{b}=10$ has significantly less chatter approaching the origin than the branch-and-bound iteration limit of  $\underline{i}^{b}=2$ and  $\underline{i}^{b}=5$. Due to the dynamics and numerics of switching between an electric and chemical thruster, the branch-and-bound iteration limit of  $\underline{i}^{b}=10$ empirically reaches a stationary point near the optimal solution while the branch-and-bound iteration limit of $\underline{i}^{b}=20$ has a larger deviation from the optimal solution. For Example~\ref{prob:min_thrust_MIQP}, Figure~\ref{fig:DONUT_pos_bnb} illustrates a smaller trajectory chatter approaching the origin with final Euclidean norm error relative to the origin of $0.1443,~0.1069,~0.1017,~0.1017,$ and $9.0609\cdot 10^{-4}$ for $\underline{i}^{b}=2,~\underline{i}^{b}=5,~\underline{i}^{b}=10,\underline{i}^{b}=20,$ and optimal at the end of the simulation, respectively . Figure~\ref{fig:ST_bnb_vary_pos_part2}, in contrast, has a larger trajectory chatter approaching the origin with a final Euclidean norm error relative to the origin of $5.8054,~11.6990,~3.3998\cdot10^{-7},~16.8119$ and $9.0609\cdot10^{-4}$ for $\underline{i}^{b}=2,~\underline{i}^{b}=5,~\underline{i}^{b}=10,\underline{i}^{b}=20,$  and optimal at the end of the simulation, respectively.

For branch-and-bound iteration limits, the trajectory from Algorithm~\ref{alg:uniting_control} is interpretable since it is solved with Algorithm~\ref{alg:bnb} and comparable to the optimal solution. After 30 sample times, Figure~\ref{fig:ST_pos_bnb} and Figure~\ref{fig:DONUT_pos_bnb} highlight the practical importance of a numerical optimizer with tight numerical accuracy when the spacecraft needs to exactly reach the terminal point. The chattering behavior near the origin in Figure~\ref{fig:ST_bnb_vary_pos_part2} and Figure~\ref{fig:DONUT_bnb_vary_pos_part2} illustrate that Assumption~\ref{assum:asym_control}.2 does not hold in practice for sufficiently user-desired small iteration limits and $\varepsilon_{0}$ is sufficiently small. This highlights a decision for practitioners: what is a sufficient numerical accuracy, $\epsilon_{0}$, and sufficient iteration limit to achieve the desired $\epsilon_{0}$? Additionally,  Figure~\ref{fig:ST_bnb_vary_pos_part2} and Figure~\ref{fig:DONUT_bnb_vary_pos_part2} empirically verify the uniting control law constructed via a hybrid model in Section~\ref{sec:hybrid_theory} is an effective method to achieve asymptotic stability and reduce computational usage via iteration limits.

\begin{figure}[!ht]
\centering
\begin{subfigure}{0.4\textwidth}
    \includegraphics[width=\textwidth]{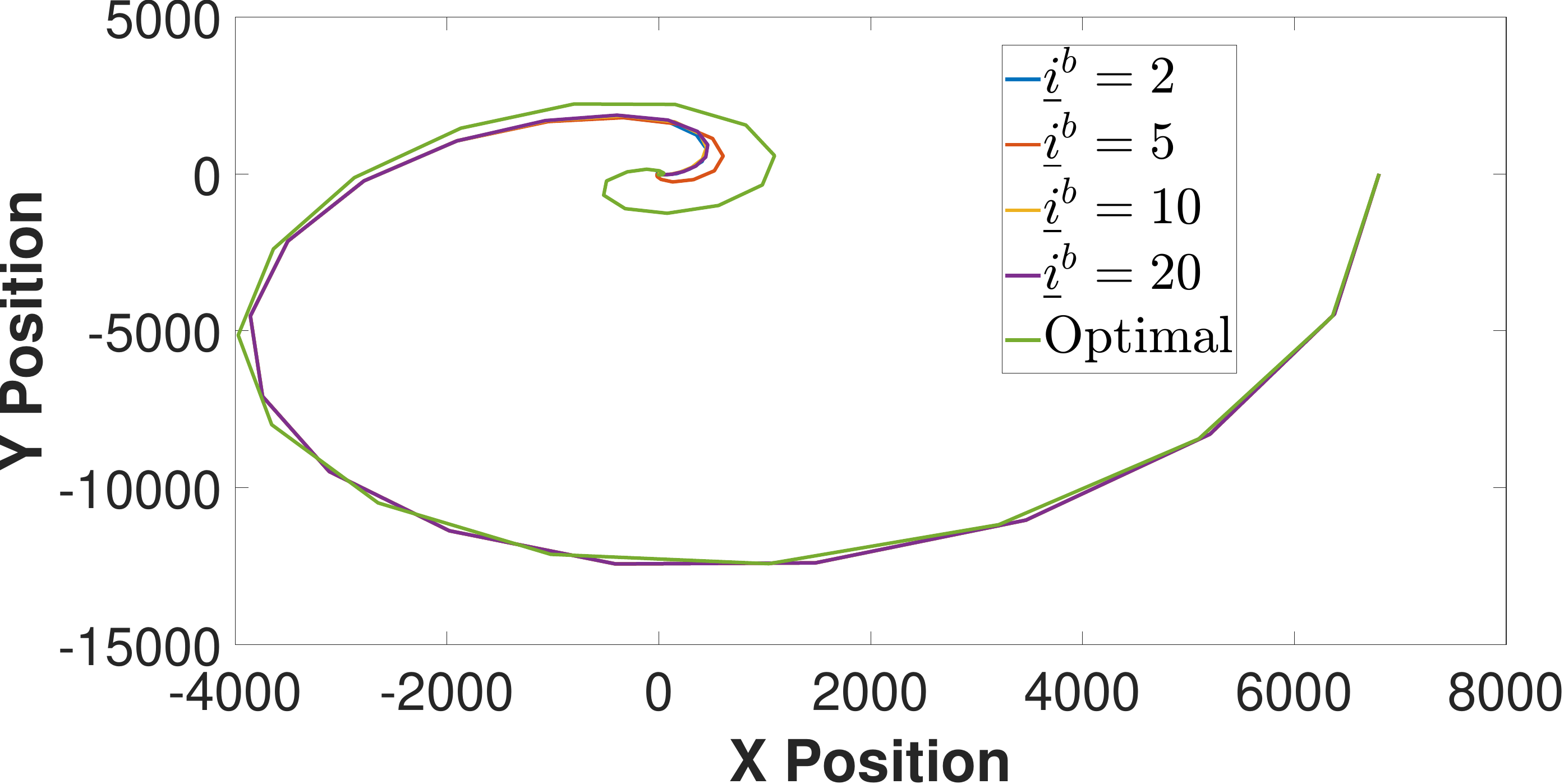}
    \caption{Position for first 30 sample times.}
    \label{fig:ST_bnb_vary_pos_part1}
\end{subfigure}
\hspace{10pt}
\begin{subfigure}{0.4\textwidth}
    \includegraphics[width=\textwidth]{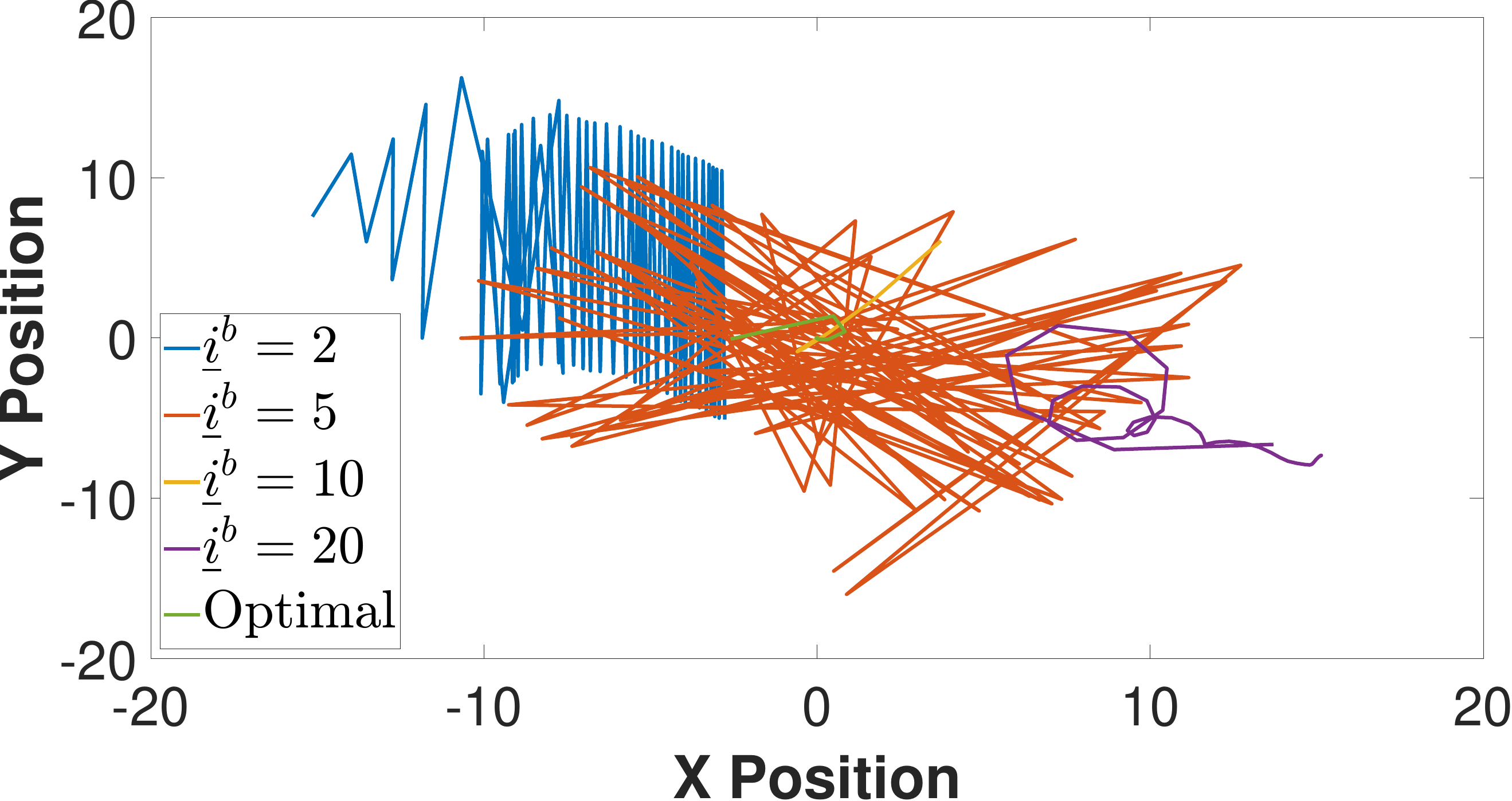}
    \caption{Position after 30 sample times to end.}
    \label{fig:ST_bnb_vary_pos_part2}
\end{subfigure}
\caption{Varying Branch-and-Bound Iteration Limits for Switching Thrust (Example~\ref{prob:switching_thrusters}) with $V_{feas}$.}
\label{fig:ST_pos_bnb}
\end{figure}

\begin{figure}[!ht]
\centering
\begin{subfigure}{0.4\textwidth}
    \includegraphics[width=\textwidth]{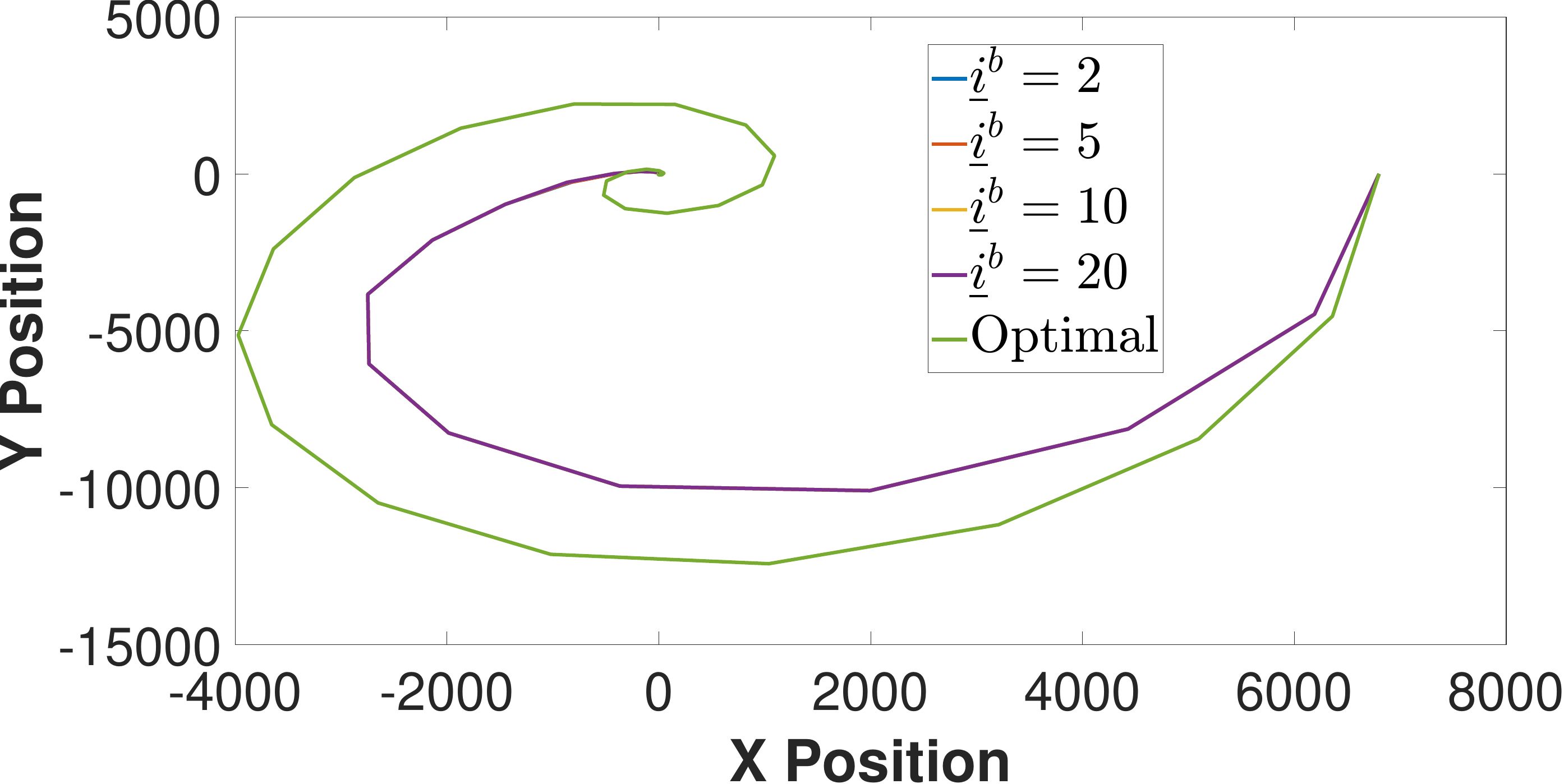}
    \caption{Position for first 30 sample times.}
    % \label{fig:first}
\end{subfigure}
\hspace{10pt}
\begin{subfigure}{0.4\textwidth}
    \includegraphics[width=\textwidth]{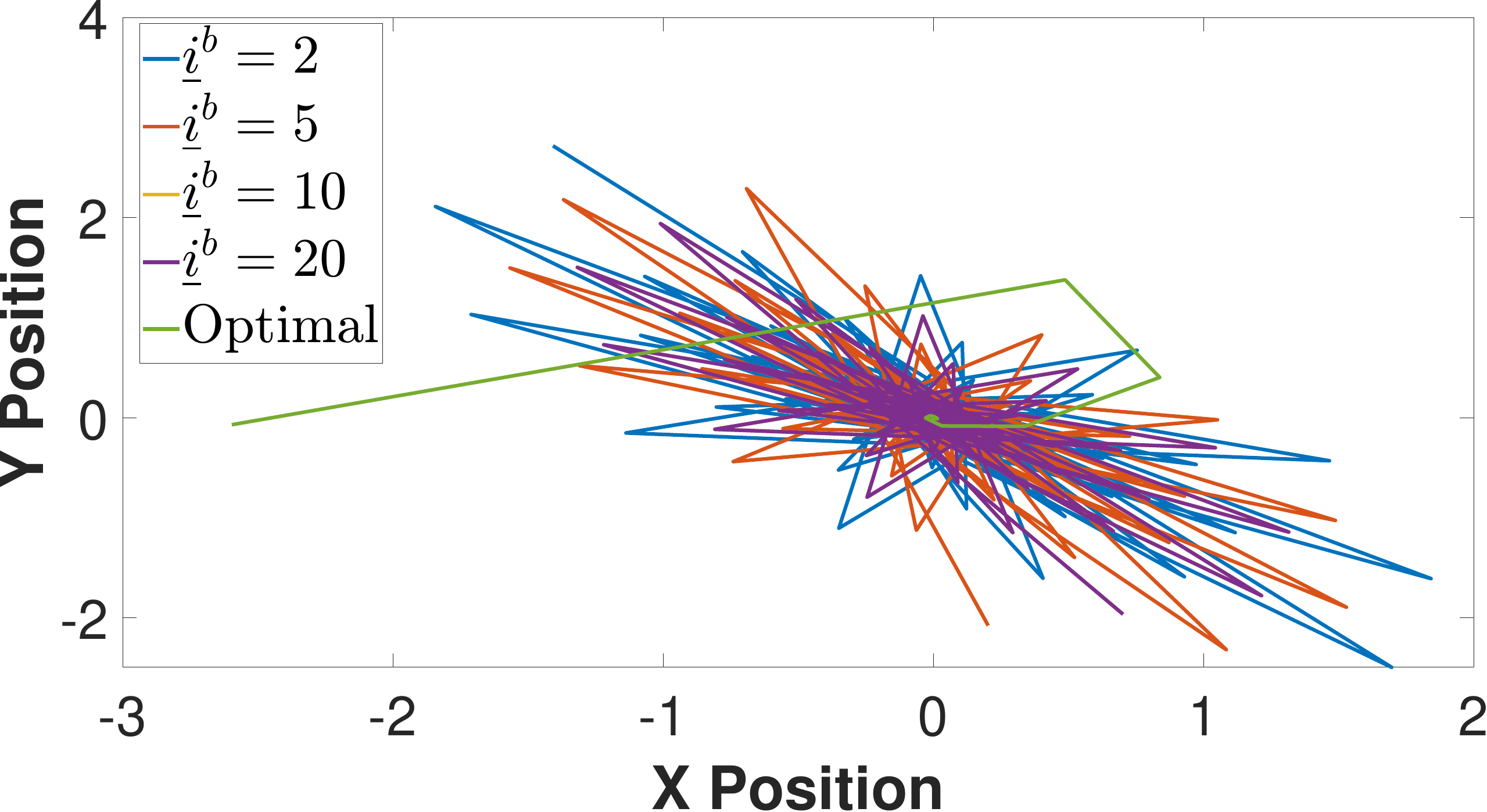}
    \caption{Position after 30 sample times to end.}
    \label{fig:DONUT_bnb_vary_pos_part2}
\end{subfigure}
\caption{Varying Branch-and-Bound Iteration Limits for Minimum Thrust (Example~\ref{prob:min_thrust_MIQP}) with $V_{feas}$.}
\label{fig:DONUT_pos_bnb}
\end{figure}

Figure~\ref{fig:ST_qp_pos} and Figure~\ref{fig:DONUT_qp_pos} provide context for tracking error of the closed loop system before and after 30 sample times, which are more exotic than the branch-and-bound cases. To note, trajectories not visible on Figure~\ref{fig:ST_qp_vary_pos_part2} and Figure~\ref{fig:DONUT_qp_vary_pos_part2}, e.g., an iteration limit of 1 are unstable. This coincides with the computational exploratory work for a nonlinear MPC problem that empirically verifies there are minimum user-desired iteration limits to achieve stability of the closed loop system \cite{behrendt2025time}.

\begin{figure}[!ht]
\centering
\begin{subfigure}{0.4\textwidth}
    \includegraphics[width=\textwidth]{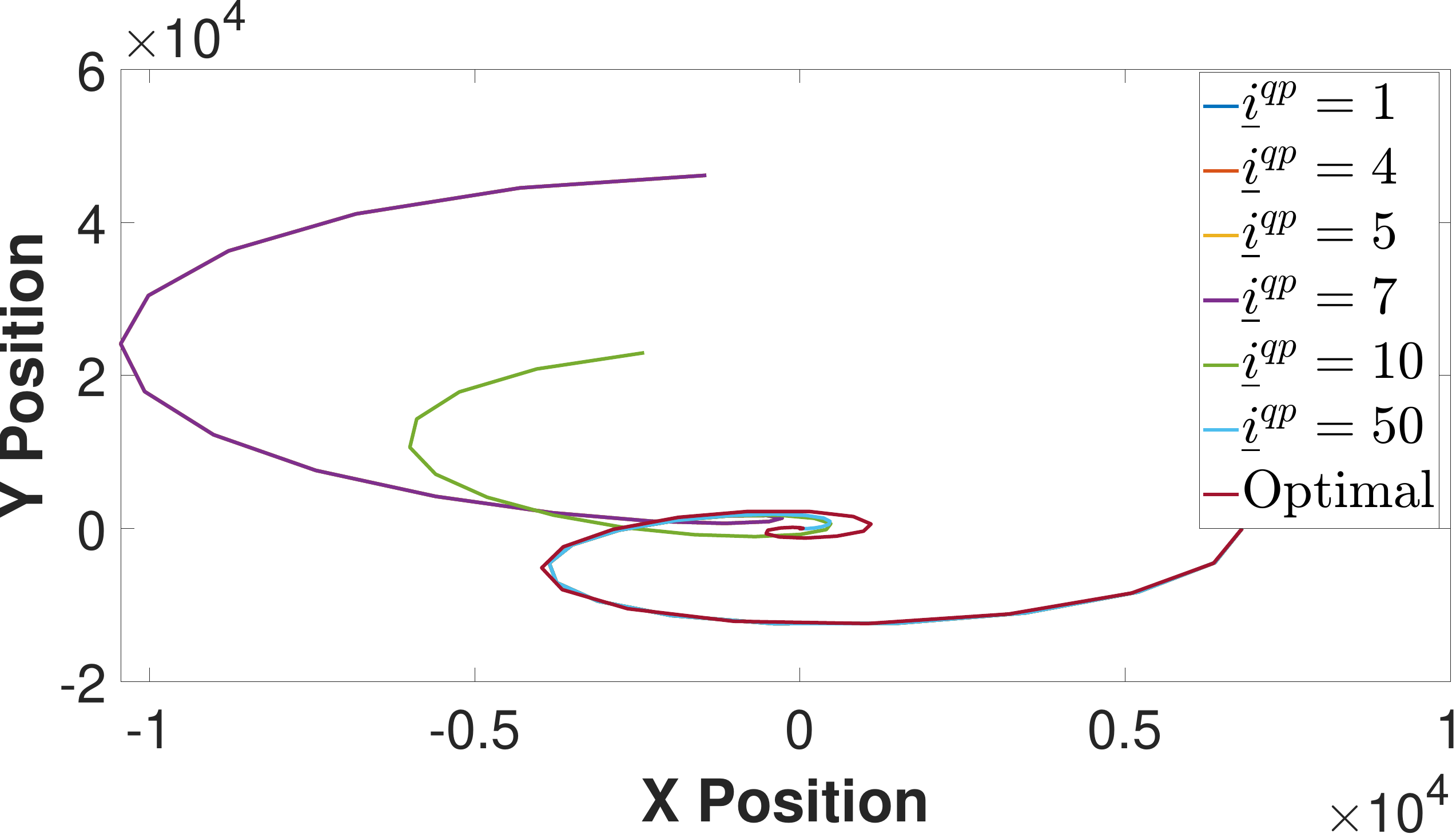}
    \caption{Position for first 30 sample times.}
    % \label{fig:first}
\end{subfigure}
\hspace{10pt}
\begin{subfigure}{0.4\textwidth}
    \includegraphics[width=\textwidth]{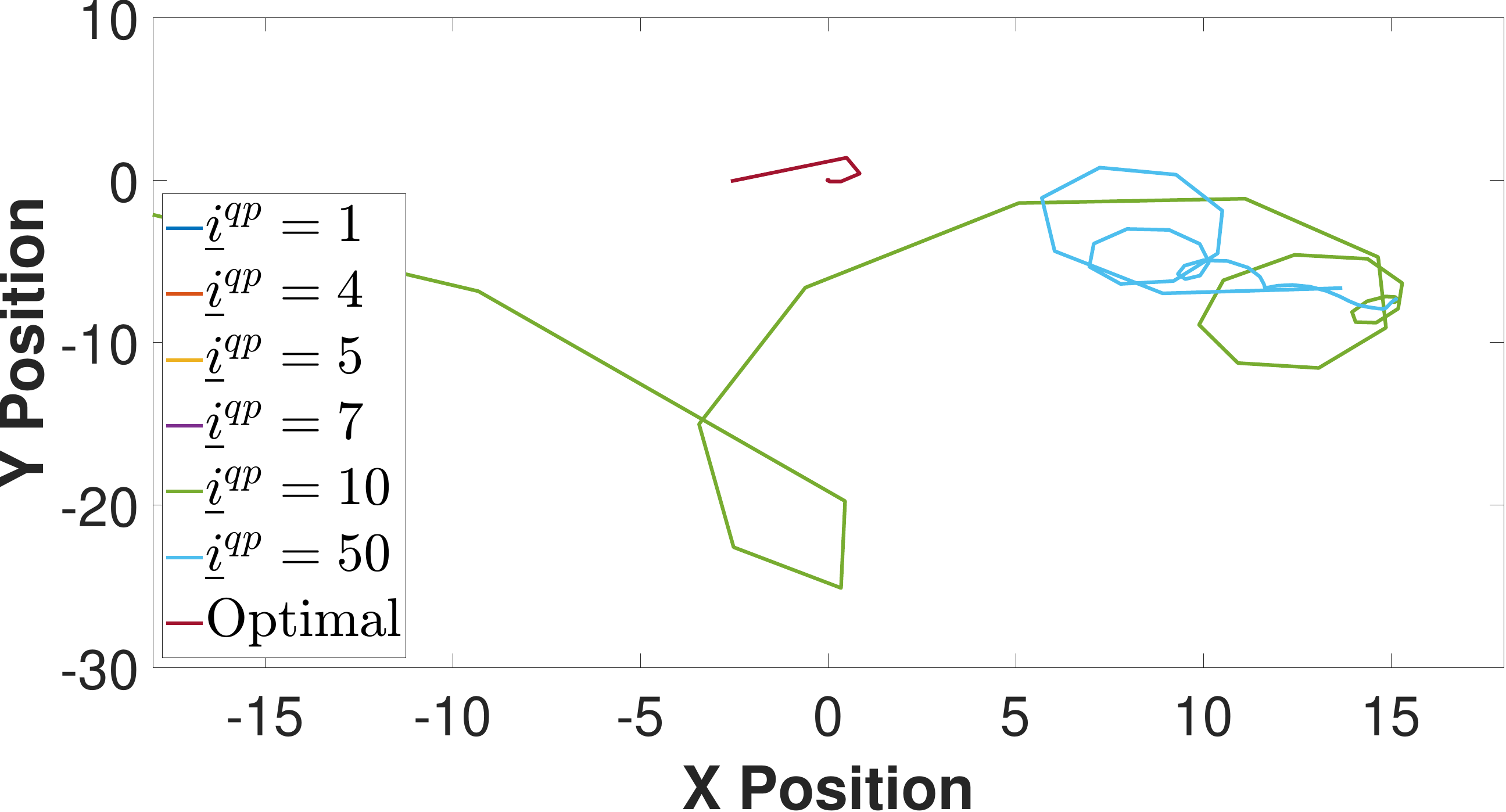}
    \caption{Position after 30 sample times to end.}
    \label{fig:ST_qp_vary_pos_part2}
\end{subfigure}
\caption{Varying Quadratic Programming iterations for Switching Thrust (Example~\ref{prob:switching_thrusters}) with $V_{feas}$.}
\label{fig:ST_qp_pos}
\end{figure}

\begin{figure}[!ht]
\centering
\begin{subfigure}{0.4\textwidth}
    \includegraphics[width=\textwidth]{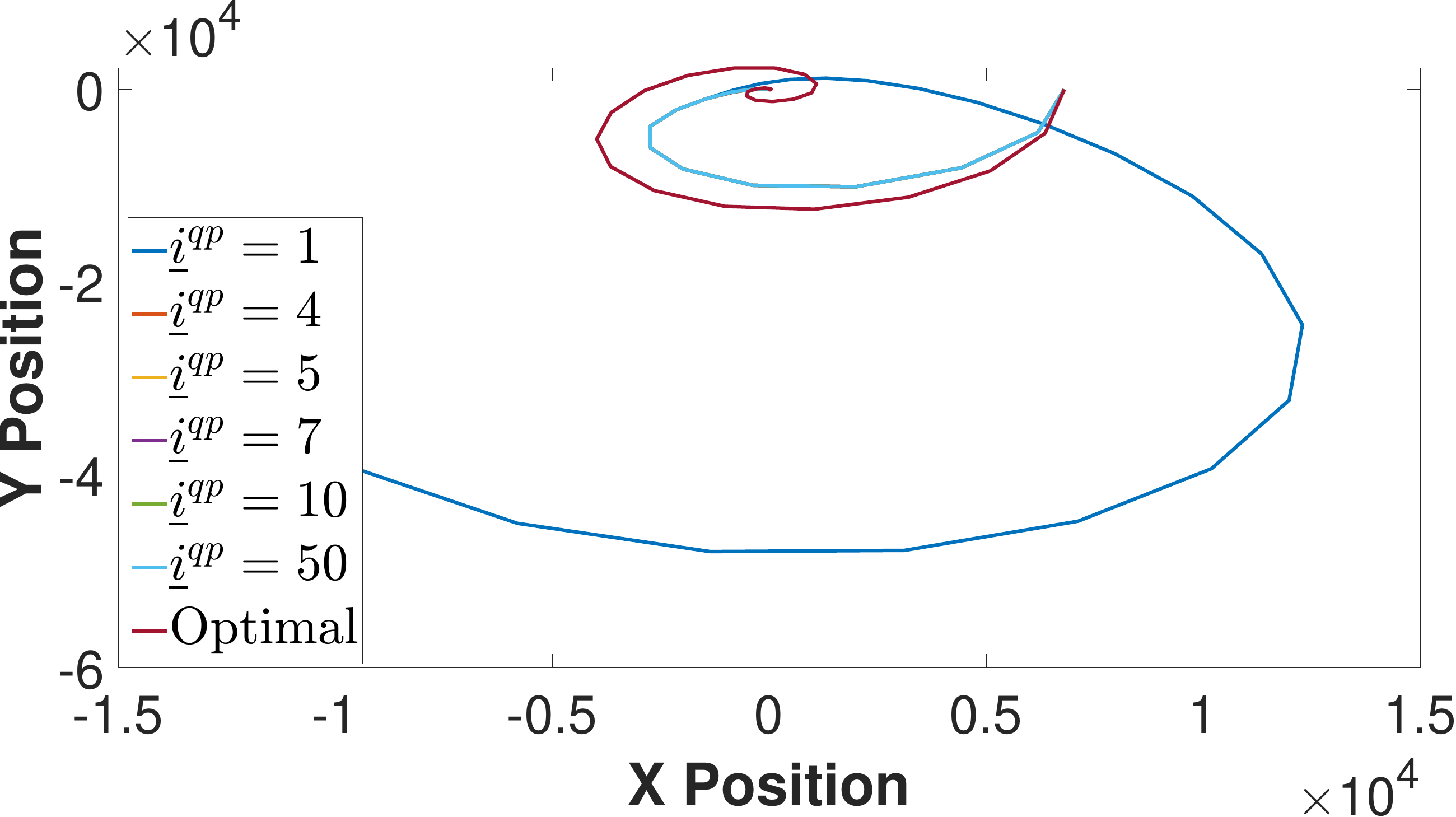}
    \caption{Position for first 30 sample times.}
    % \label{fig:first}
\end{subfigure}
\hspace{10pt}
\begin{subfigure}{0.4\textwidth}
    \includegraphics[width=\textwidth]{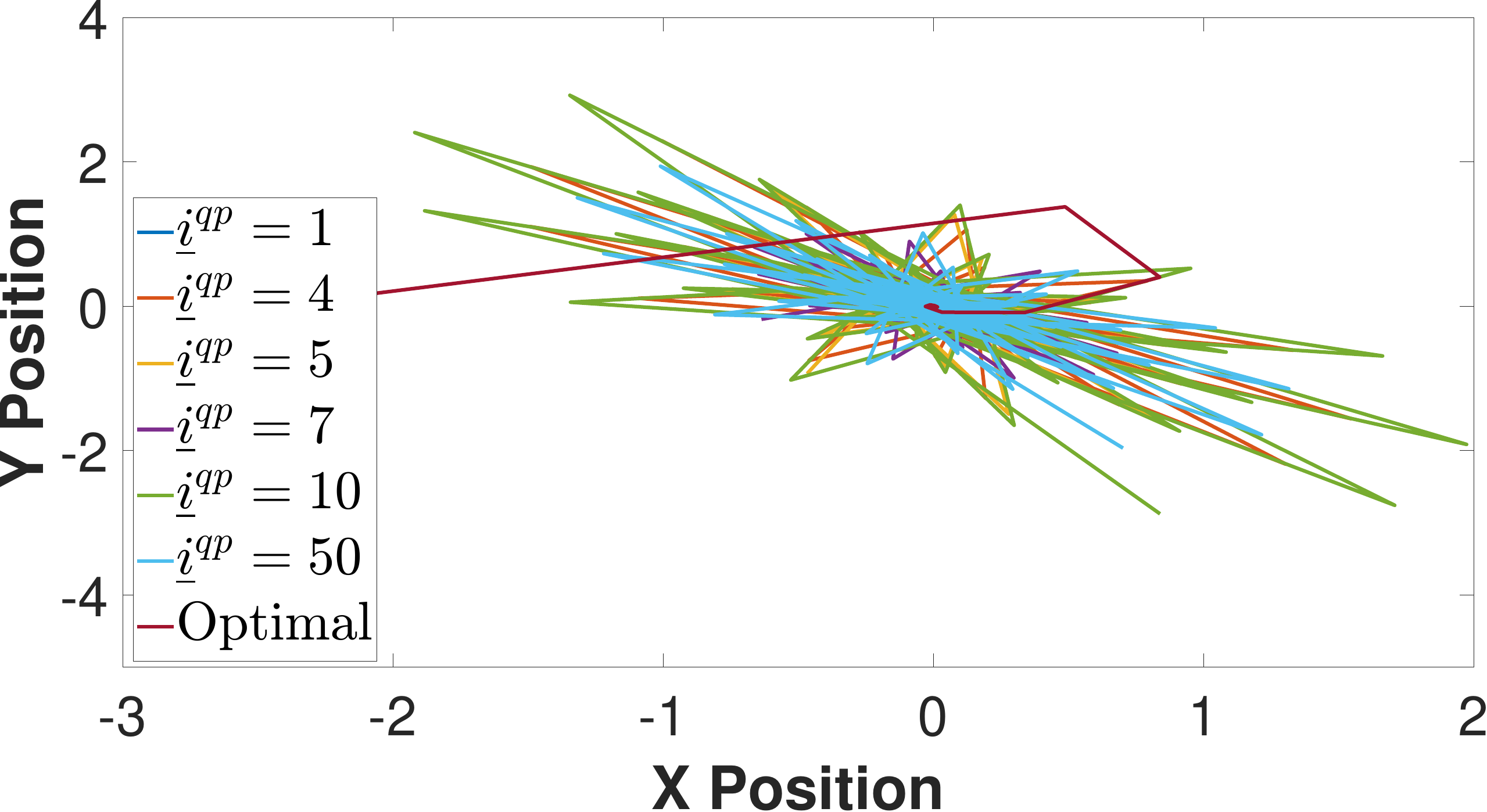}
    \caption{Position after 30 sample times to end.}
    \label{fig:DONUT_qp_vary_pos_part2}
\end{subfigure}
\caption{Varying Quadratic Programming iterations for Minimum Thrust (Example~\ref{prob:min_thrust_MIQP}) with $V_{feas}$.}
\label{fig:DONUT_qp_pos}
\end{figure}

\subsection{Computational Improvements}\label{sec:compute_improvements}
In Section~\ref{sec:closer_look}, by inspection the bulk of challenging trajectory optimization occurs within the first 30 sample time and the rest of the trajectory has numerical challenges. This section compares the average QP iteration limits and branch-and-bound iteration limit for Example~\ref{prob:switching_thrusters} and Example~\ref{prob:min_thrust_MIQP} with initial conditions of $i_{0}^{b}=20$ and $i_{0}^{qp}=100$ for the first 30 sample times. The results presented illustrate that both $V_{obj}$ and $V_{feas}$ are effective measures of perturbation on the set $\mathcal{A}$ and iteration limits effectively reduce the average number of iterations. In Table~\ref{tab:bnb_switching}, the average branch-and-bound iteration is the same for Example~\ref{prob:switching_thrusters} or slightly worse for $V_{feas}$ compared to $V_{obj}$. However in Table~\ref{tab:bnb_donut}, $V_{feas}$ results in a lower average branch-and-bound iteration for all cases. In Table~\ref{tab:qp_switching}, the average quadratic programming iteration is comparable for Example~\ref{prob:switching_thrusters} between $V_{obj}$ and $V_{feas}$. In Table~\ref{tab:qp_donut}, the average quadratic programming iteration for $V_{feas}$ is significantly lower than $V_{obj}$.

\begin{table}[!ht]
    \centering
    \begin{tabular}{|c|c|c|c|c|}
    \hline
    BnB Iters ($i^{b}$)&$i^{b}$ &$i^{b}$&$i^{b}$&$i^{b}$\\
    \hline 
        Lyapunov Function & 2&  5&  10&20\\
        \hline
         $V_{obj}$&  9.2&  11&  14& 20\\
         \hline
         $V_{feas}$&  9.2&  11&  14& 20\\
         \hline
    \end{tabular}
    \caption{Average Branch-and-Bound Iteration Limits for Example~\ref{prob:switching_thrusters} in the first 30 sample times.}
    \label{tab:bnb_switching}
\end{table}

\begin{table}[!ht]
    \centering
    \begin{tabular}{|c|c|c|c|c|c|}
    \hline
     BnB Iters ($i^{b}$)&$i^{b}$ &$i^{b}$&$i^{b}$&$i^{b}$\\
    \hline
        Lyapunov Function & 2&  5& 10&20 \\
        \hline
         $V_{obj}$&  17.6&  18&  18.7& 20\\
         \hline
         $V_{feas}$&  7.4&  9.5&  13& 20\\
         \hline
    \end{tabular}
    \caption{Average Branch-and-Bound Iteration Limits for Example~\ref{prob:min_thrust_MIQP} in the first 30 sample times.}
    \label{tab:bnb_donut}
\end{table}

\begin{table}[!ht]
    \centering
    \begin{tabular}{|c|c|c|c|c|c|c|}
    \hline
     QP Iters ($i^{qp}$)&$i^{qp}$ &$i^{qp}$&$i^{qp}$&$i^{qp}$&$i^{qp}$ & $i^{qp}$\\
    \hline
        Lyapunov Function & 1&  4&  5&7& 10& 50\\
        \hline
         $V_{obj}$&  70.3&  71.2&  71.5& 72.1& 58.0& 70\\
         \hline
         $V_{feas}$&  73.6&  74.4&  74.7& 75.2& 67.0& 70 \\
         \hline
    \end{tabular}
    \caption{Average QP Iteration Limits for Example~\ref{prob:switching_thrusters} in the first 30 sample times.}
   \label{tab:qp_switching}
\end{table}

\begin{table}[!ht]
    \centering
    \begin{tabular}{|c|c|c|c|c|c|c|}
    \hline
      QP Iters ($i^{qp}$)&$i^{qp}$ &$i^{qp}$&$i^{qp}$&$i^{qp}$&$i^{qp}$ & $i^{qp}$\\
    \hline
        Lyapunov Function & 1&  4&  5&7& 10&  50\\
        \hline
         $V_{obj}$&  96.7&  87.2&  87.3& 87.6& 88.0& 93.3\\
         \hline
         $V_{feas}$&  70.3&  32.8&  33.5& 34.9& 37.0 &65.0 \\
         \hline
    \end{tabular}
    \caption{Average QP Iteration Limits for Example~\ref{prob:min_thrust_MIQP} in the first 30 sample times.}
    \label{tab:qp_donut}
\end{table}

\subsection{Tuning Uniting Control Law Terms} \label{sec:tuning}

From Section~\ref{sec:compute_improvements}, no uniting control case has an average iteration less than the fixed iteration limits. However, empirically there exist fixed iteration limit cases that lead to lower iterations than the uniting control law. Empirically, this suggests the uniting control law is conservative and further tuning of the constants $c_{p,0}$ and $~c_{p,1}$  is possible. 

When stability and robustness is crucial, then the uniting control law presented in this work has strong theoretical guarantees. These theoretical guarantees can be improved in practice by empirically comparing the performance of fixed iteration limits to the uniting control law with terms $c_{p,0}$ and $~c_{p,1}.$ In practice, a user can empirically tune the $c_{p,0}$ and $~c_{p,1}$ terms by observing the plot of the Lyapunov function.

\subsection{Results Summary}
Section~\ref{sec:comparionOptimal} highlights for a practitioner there is a tradeoff between branch-and-bound iteration limits and quadratic programming iteration limits. In Example~\ref{prob:switching_thrusters} and Example~\ref{prob:min_thrust_MIQP}, branch-and-bound iteration limits result in smaller tracking error than quadratic programming iteration limits. This is to be expected because the branch-and-bound iterations enforce binary satisfaction from one time step to the MPC horizon N and quadratic programming iterations enforce constraint satisfaction at every time step. Section~\ref{sec:closer_look} highlights that for various iteration limits Algorithm~\ref{alg:bnb} reasonably tracks the optimal trajectory until near the origin where the numerics become unstable and Assumption~\ref{assum:asym_control}.2 may not hold in practice for sufficiently user-desired small iteration limits. Section~\ref{sec:compute_improvements} highlights there are problem dependent tradeoffs between $V_{obj},~V_{feas},$ and fixed iteration limits for average compute. Finally, Section~\ref{sec:tuning} discusses practical guidelines to tune $c_{p,0}$, $~c_{p,1},$ where numerical values for $c_{p,0}$, $~c_{p,1},$ are presented in Section~\ref{sec:results}.

\section{Conclusion}\label{sec:conclusion}
In this work, switching between iteration limited mixed-integer quadratic model predictive controllers (MIQP-MPC) is modeled as a hybrid system. Then stability results are derived for the theoretical model and implementable algorithms are developed to analyze the control and optimization closed loop system. Finally, simulations for the switching thruster and minimum thrust spacecraft rendezvous problems are used to verify the efficacy of the hybrid model for switching iteration limited MPC controllers. Future work will focus on application to hardware and theoretical analysis of Algorithm~\ref{alg:bnb} when modeled explicitly as a hybrid system.

\section{Appendix}\label{sec:appendix}
 The dynamics for the simulations are the Clohessy Wiltshire dynamics for spacecraft rendezvous where $\Delta T:=t-t_{0}$ is the sample time, $x_{k}$ technically denotes $x(\Delta T k)$ and the dynamics are
\begin{equation}
    A=\STM{t}{t_0}=\begin{bmatrix} \Phi_{\rm rr}(\Delta T) & \Phi_{\rm rv}(\Delta T) \\ \Phi_{\rm vr}(\Delta T) & \Phi_{\rm vv}(\Delta T) \end{bmatrix},
\end{equation}

where 

\begin{align*}
       &\Phi_{\rm rr}(t,t_0)= \begin{bmatrix} 4-3w_c & 0 & 0\\  6(w_s-w)&1&0\\0&0&w_c\end{bmatrix}, \\
        &\Phi_{\rm rv}(t,t_0)= \frac{1}{n_{L}}\begin{bmatrix} w_s & 2(1-w_c) & 0\\ 2(w_c-1)&4w_s-3w&0\\0&0&w_s\end{bmatrix},
        \\&\Phi_{\rm vr}(t,t_0)= n_{L}\begin{bmatrix} 3w_s & 0 & 0\\ 6(w_c-1)&0&0\\0&0&-w_s\end{bmatrix}, 
        \\&\Phi_{\rm vv}(t,t_0)= \begin{bmatrix} w_c & 2w_s & 0\\ -2w_s&4w_c-3&0\\0&0&w_c\end{bmatrix},        
\end{align*}
with $w=n_{L}(\Delta T)=n_{L}(t-t_0)$, $\cdot_c=\cos(\cdot)$, and $*_s=\sin(\cdot)$ for the sake of brevity where $n_{L}$ denotes the orbital rate. 
 
 The control matrices for Example~\ref{prob:switching_thrusters} models the two thruster control matrices as follows where
\begin{equation}
B_{1}=\begin{bmatrix}
    \Phi_{rv}(\Delta T) \\ \Phi_{vv}(\Delta T)
\end{bmatrix},
\end{equation}
is the control matrix when assuming a chemical impulsive thrust and
\begin{equation}\label{eq:elec_thrust}
B_{2}=\int_{0}^{k} \STM{\tau}{ t_0} \begin{bmatrix}
    \zeros{3} \\ \eye{3}/m_{s}
\end{bmatrix}d\tau,
\end{equation}
is the control matrix when assuming continuous electric thrust. The control matrix for Example~\ref{prob:min_thrust_MIQP} is the electric thruster model, $B_{2}.$ For this work, $m_{s}=100$kg and $n_{L}=1.13\times10^{-3}s^{-1}$.

\bibliographystyle{IEEEtran}
\bibliography{ref}

% \begin{IEEEbiography}[{\includegraphics[width=1in,clip,keepaspectratio]{figures/fina.jpg}}]{Luke Fina} received the M.S. degree in mechanical engineering from the University of Florida in 2023. He is currently working toward the Ph.D. degree in mechanical engineering degree at the University of Florida, Gainesville, FL USA.

% His research interest include mixed-integer optimization and applications for space craft systems with an emphasis on compute-constrained and distributed systems.
% \end{IEEEbiography}
% \vskip 0pt plus -1fil

% \begin{IEEEbiography}[{\includegraphics[height=1in,clip,keepaspectratio]{figures/petersen.png}}]{Christopher Petersen} received the Ph.D. degree in aerospace engineering from the University of Michigan in 2016.

% Dr. Christopher “Chrispy” Petersen is an Assistant Professor at the University of Florida in the Mechanical \& Aerospace Engineering Department as of Fall 2022.  He leads the Spacecraft Technology And Research (STAR) Laboratory, which is focused on real-time spacecraft guidance, navigation, control, and autonomy (GNCA).  Before Florida, he was at the Air Force Research Lab (AFRL) where he worked on 10+ satellite experiments, developing, deploying, and executing GNCA algorithms for ground and on-orbit use.  For his accomplishments, in 2021 he was awarded the AFRL Early Career Award.
% \end{IEEEbiography}

\end{document}